\newtheorem{theorem}{Theorem}[section]
\newtheorem{lemma}{Lemma}[section]
\newtheorem{proposition}{Proposition}[section]
\newtheorem{assumption}{Assumption}[section]
\theoremstyle{definition}
\newtheorem{definition}{Definition}[section]
\newtheorem{remark}{Remark}[section]
\renewcommand{\tilde}{\widetilde}
\renewcommand{\hat}{\widehat}
\DeclareMathOperator{\Var}{Var}
\DeclareMathOperator{\Cov}{Cov}
\begin{document}

\title[]{Nonparametric regression for locally stationary functional time series}
\thanks{} 

\author[D. Kurisu]{Daisuke Kurisu}


\address[D. Kurisu]{Graduate School of International Social Sciences, Yokohama National University\\
79-4 Tokiwadai, Hodogaya-ku, Yokohama 240-8501, Japan.
}
\email{kurisu-daisuke-jr@ynu.ac.jp}

\begin{abstract}
In this study, we develop an asymptotic theory of nonparametric regression for a locally stationary functional time series. First, we introduce the notion of a locally stationary functional time series (LSFTS) that takes values in a semi-metric space. Then, we propose a nonparametric model for LSFTS with a regression function that changes smoothly over time.  We establish the uniform convergence rates of a class of kernel estimators, the Nadaraya-Watson (NW) estimator of the regression function, and a central limit theorem of the NW estimator. 
\end{abstract}

\keywords{nonparametric regression, functional time series, locally stationary process\\ \indent
\textit{MSC2020 subject classifications}: 60F05, 62G08, 62M10.}

\maketitle

\section{Introduction}

In an increasing number of situations, the collected data appear as functional or curve time series coming from different research fields such as biometrics (\cite{ChMu09}), environmetrics (\cite{AuDuNo15}), econometrics (\cite{BuHaHoNe09}, \cite{BuHo18}), and finance (\cite{KoZh12}, \cite{ChLeTu16}, \cite{LiRoSh20}). We refer to \cite{FeVi06} and \cite{HoKo12} as standard references for functional time series analysis. 

In the literature on functional time series analysis, most studies are based on (linear) stationary models (e.g., \cite{Bo00, Bo02}, \cite{DeSh05}, \cite{AnPaSa06}, \cite{AuDuNo15}). However, many functional time series exhibit a nonstationary behavior. For example, in the financial industry, implied volatility of an option as a function of moneyness changes over time. We can also find other examples of possibly nonstationary functional time series in \cite{vaEi18}. One way to model nonstationary behavior is provided by the theory of locally stationary processes.

Locally stationary processes, as proposed by \cite{Da97}, are nonstationary time series that allow parameters of the time series to be time-dependent. They can be approximated by a stationary time series locally in time, which enables asymptotic theories to be established for the estimation of time-dependent characteristics. In time series analysis, locally stationary models are mainly considered in a parametric framework with time-varying coefficients. For example, we refer to \cite{DaSu06}, \cite{FrSaSu08}, \cite{KoLi12}, \cite{Zh14} and \cite{Tr17}. Moreover, nonparametric methods for stationary and nonstationary time series models have also been developed. We refer to, among others, \cite{Ma96}, \cite{FaYa03} and \cite{Ha08} for stationary time series as well as \cite{Kr09}, \cite{Vo12}, \cite{ZhWu15}, and \cite{Tr19} for contributions on locally stationary time series. Recently, the notion of local stationarity has been extended to spatial data by \cite{MaYa18}, \cite{Pe18}, and \cite{Ku21}. We refer to \cite{DaRiWu19} for a general theory in the literature on locally stationary processes. 

In contrast with the abovementioned studies regarding (nonparametric) time series analysis, studies pertaining to nonparametric methods for locally stationary functional time series are scarce despite empirical interest in modeling time-varying dependence structures of functional data. We refer to the works of \cite{vaEi18} and \cite{Auva20} as important recent contributions on the theoretical development for statistical methods for locally stationary functional time series. We also mention \cite{Ku21b} who investigates functional principal component analysis for locally stationary functional data. 

The objective of this study is to develop a framework of nonparametric regression for a locally stationary functional time series that takes values in a semi-metric space (e.g., Banach and Hilbert spaces). In particular, we first propose a nonparametric regression model for a locally stationary functional time series with a regression function that is allowed to change smoothly over time. Then, we (1) derive the uniform convergence rate and (2) establish the point-wise asymptotic normality of a kernel estimator for the regression function. To attain the first objective, we derive uniform convergence rates for a class of estimators based on kernel averages, which are crucial for demonstrating our main results. As these estimators include a wide range of kernel-based estimators such as the Nadaraya-Watson (NW) estimators, the general results are of independent interest. Our results extend the results of \cite{Vo12}, who studies univariate locally stationary time series, and of \cite{Ma05} and \cite{FeVi06}, who study stationary functional time series to our framework. To the best of our knowledge, this is the first study to develop an asymptotic theory of nonparametric regression for locally stationary functional time series. 

The organization of this paper is as follows. In Section \ref{Sec: setting}, we introduce the notion of local stationarity for functional time series that takes values in a semi-metric space and explain the dependence structure of the functional time series. In Section \ref{Sec: Main}, we present the main results including the uniform convergence rates of kernel estimators and asymptotic normality of the NW estimator of the regression function. All proofs are included in the Appendix. 

\subsection{Notations}
For any positive sequences $a_{n}$ and $b_{n}$, we write $a_{n} \lesssim b_{n}$ if a constant $C >0$ independent of $n$ exists such that $a_{n} \leq Cb_{n}$ for all $n$,  $a_{n} \sim b_{n}$ if $a_{n} \lesssim b_{n}$ and $b_{n} \lesssim a_{n}$, and $a_{n} \ll b_{n}$ if $a_{n}/b_{n} \to 0$ as $n \to \infty$. For any $a,b \in \mathbb{R}$, we write $a \vee b=\max\{a,b\}$ and $a \wedge b=\min\{a,b\}$. We use the notation $\stackrel{d}{\to}$ to denote convergence in distribution. For $x \in \mathbb{R}$, $\lfloor x \rfloor$ denotes the integer part $x$.

\section{Settings}\label{Sec: setting}

In this section, we introduce the notion of a locally stationary functional time series that extends the notion of local stationarity introduced by \cite{Da97}. Furthermore, we discuss dependence structures of the functional time series.

\subsection{Model}

Let $\{Y_{t,T}, X_{t,T}\}_{t=1}^{T}$ be random variables where $Y_{t,T}$ is real-valued and $X_{t,T}$ takes values in some semi-metric space $\mathscr{H}$ with a semi-metric $d(\cdot,\cdot)$. In most applications, $\mathscr{H}$ is a Banach or Hilbert space with norm $\|\cdot\|$ so that $d(u,v)=\|u-v\|$. 

In this study, we consider the following model: 
\begin{align}\label{def: model}
Y_{t,T} &= m\left({t \over T}, X_{t,T}\right) + \sigma\left({t \over T}, X_{t,T}\right)\varepsilon_{t},\ t=1,\dots, T, 
\end{align}
where $\{\varepsilon_{t}\}_{t \in \mathbb{Z}}$ is a sequence of independent and identically distributed random variables that is independent of $\{X_{t,T}\}_{t=1}^{T}$. We write $\sigma\left({t \over T}, X_{t,T}\right)\varepsilon_{t}$ as $\varepsilon_{t,T}$ for notational convenience. We also assume that $\{X_{t,T}\}$ is a locally stationary functional time series, and the regression function $m$ is allowed to change smoothly over time. 

\subsection{Local stationarity}
Intuitively, a functional time series, $\{X_{t,T}\}_{t=1}^{T}$ ($T \to \infty$), is locally stationary if it behaves approximately stationary in local time. We refer to \cite{DaSu06} and \cite{DaRiWu19} for the idea of a locally stationary time series and its general theory, as well as to \cite{vaEi18} and \cite{Auva20} for the notion of local stationarity for a Hilbert space-valued time series. To ensure that it is locally stationary around each rescaled time point $u$, a process $\{X_{t,T}\}$ can be approximated by a stationary functional time series $\{X_{t}^{(u)}\}$ stochastically. This concept can be defined as follows. 

\begin{definition}\label{def: LSfTS}
The $\mathscr{H}$-valued stochastic process $\{X_{t,T}\}_{t=1}^{T}$ is locally stationary if for each rescaled time point $u \in [0,1]$, there exists an associated $\mathscr{H}$-valued process $\{X_{t}^{(u)}\}_{t \in \mathbb{Z}}$ with the following properties: 
\begin{itemize}
\item[(i)] $\{X_{t}^{(u)}\}_{t \in \mathbb{Z}}$ is strictly stationary.
\item[(ii)] It holds that 
\begin{align}\label{def: LSfTS-ineq}
d\left(X_{t,T}, X_{t}^{(u)}\right) \leq \left(\left|{t \over T} - u\right| + {1 \over T}\right)U_{t,T}^{(u)}\ a.s.,
\end{align}
for all $1 \leq t \leq T$, where $\{U_{t,T}^{(u)}\}$ is a process of positive variables satisfying $E[(U_{t,T}^{(u)})^{\rho}]<C$ for some $\rho>0$ and $C<\infty$ that are independent of $u, t$, and $T$.
\end{itemize}
\end{definition}

Definition \ref{def: LSfTS} is a natural extension of the notion of local stationarity for the real-valued time series introduced by \cite{Da97}.  Moreover, our definition corresponds to that of \cite{vaEi18} (Definition 2.1) when $\mathscr{H}$ is the Hilbert space $L_{\mathbb{R}}^{2}([0,1])$ of all real-valued functions that are square integrable with respect to the Lebesgue measure on the unit interval $[0,1]$ with the $L_{2}$-norm given by
\[
\|f\|_{2} = \sqrt{\langle f,f \rangle},\ \langle f,g \rangle = \int_{0}^{1}f(t)g(t)dt,
\] 
where $f,g\in L_{\mathbb{R}}^{2}([0,1])$. The authors also give sufficient conditions so that an $L_{\mathbb{R}}^{2}([0,1])$-valued stochastic process $\{X_{t,T}\}$ satisfies (\ref{def: LSfTS-ineq}) with $d(f,g) = \|f-g\|_{2}$ and $\rho=2$. 

\subsection{Mixing condition}
 Let $(\Omega, \mathcal{F}, P)$ be a probability space, and let $\mathcal{A}$ and $\mathcal{B}$ be subfields of $\mathcal{F}$. Define
 \begin{align*}
 \alpha(\mathcal{A},\mathcal{B}) = \sup_{A \in \mathcal{A}, B \in \mathcal{B}}|P(A \cap B)-P(A)P(B)|.
 \end{align*}
 Moreover, for an array $\{Z_{t,T}: 1\leq t\leq T\}$, define the coefficients
 \begin{align*}
 \alpha(k) = \sup_{t,T: 1\leq t\leq T-k}\alpha(\sigma(Z_{s,T}: 1\leq s\leq t), \sigma(Z_{s,T}:t+k\leq s \leq T)),
 \end{align*}
 where $\sigma(Z)$ is the $\sigma$-field generated by $Z$. The array $\{Z_{t,T}\}$ is said to be $\alpha$-mixing (or strongly mixing) if $\alpha(k) \to 0$ as $k \to \infty$. 
 
\section{Main results}\label{Sec: Main}

In this section, we consider general kernel estimators and derive their uniform convergence rates. Based on the result, we derive the uniform convergence rate and asymptotic normality of the NW estimator for the regression function in model (\ref{def: model}).

\subsection{Kernel estimation for regression functions}\label{section-kernel}

We consider the following kernel estimator for $m(u,x)$ in model (\ref{def: model}): 
\begin{align}\label{def: m-est}
\hat{m}(u,x) &= {\sum_{t=1}^{T}K_{1,h}(u-t/T)K_{2,h}(d(x, X_{t,T}))Y_{t,T} \over \sum_{t=1}^{T}K_{1,h}(u-t/T)K_{2,h}(d(x, X_{t,T}))},
\end{align}
where $K_{1}$ and $K_{2}$ denote one-dimensional kernel functions, and we used the notations $K_{j,h}(v) = K_{j}(v/h)$, $j=1,2$. Here, $h=h_{T}$ is a bandwidth satisfying $h \to 0$ as $T \to \infty$. 

Before we state the main results, we summarize the assumptions made for model (\ref{def: model}) and the kernel functions. These assumptions are standard, and similar assumptions are made by \cite{Ma05} and \cite{vaEi18}.
\begin{assumption}\label{Ass-M}
\begin{enumerate}
\item[(M1)] The process $\{X_{t,T}\}$ is locally stationary, that is, $\{X_{t,T}\}$ satisfies Definition \ref{def: LSfTS}. 
\item[(M2)]  Let $B(x,h)= \{y \in \mathscr{H}: d(x,y)\leq h\}$ denote the ball of radius $h$ centered in $x \in \mathscr{H}$. We assume that there exist positive constants $c_{d}<C_{d}$, such that for all $u \in [0,1]$, all $x \in \mathscr{H}$, and all $h>0$,
\begin{align}\label{small-ball-ass}
0<c_{d}\phi(h)f_{1}(x)\leq P(X_{t}^{(u)} \in B(x,h))=:F_{u}(h;x)\leq C_{d}\phi(h)f_{1}(x),
\end{align}
where $\phi(h) \to 0$ as $h \to \infty$, and $f_{1}(x)$ is a nonnegative functional in $x \in \mathscr{H}$. Moreover, there exist constants $C_{\phi}>0$ and $\varepsilon_{0}>0$ such that for any $0<\varepsilon<\varepsilon_{0}$, 
\begin{align}\label{small-Ball-low}
\int_{0}^{\varepsilon}\phi(u)du>C_{\phi}\varepsilon \phi(\varepsilon).
\end{align}
\item[(M3)] $\sup_{s,t,T}\sup_{s \neq t}P((X_{s,T}, X_{t,T}) \in B(x,h)\times B(x,h))\leq \psi(h)f_{2}(x)$, where $\psi(h) \to 0$ as $h \to 0$, and $f_{2}(x)$ is a nonnegative functional in $x \in \mathscr{H}$. We assume that the ratio $\psi(h)/\phi^{2}(h)$ is bounded. 
\item[(M4)] $m(u, x)$ is twice continuously partially differentiable with respect to $u$. We also assume that
\begin{align*}
\sup_{u \in [0,1]}|m(u,x)-m(u,y)|&\leq c_{m}d(x,y)^{\beta}
\end{align*}
for all $x,y \in \mathscr{H}$ for some $c_{m}>0$ and $\beta>0$.
\end{enumerate}
\end{assumption}

Condition (\ref{small-Ball-low}) is satisfied for fractal-type processes (i.e., $\phi(h) \sim \varepsilon^{\tau}$ as $\varepsilon \to 0$ for some $\tau>0$). 
In particular the condition (\ref{small-ball-ass}) is consistent with the assumption made by \cite{GaHaPr98}. If the space $\mathscr{H}$ is a separable Hilbert space, it is possible to choose a semi-metric for which condition (\ref{small-Ball-low}) is fulfilled with $\phi(\varepsilon) \sim \varepsilon^{\tau_{0}}$ as $\varepsilon \to 0$ for some $\tau_{0}>0$ (Lemma 13.6 in \cite{FeVi06}). 
We refer to \cite{Bo99} and \cite{FeVi06} 
for detailed discussions on fractal-type 
processes and the effect of a semi-metric on the small ball probability $F_{u}(h;x)$. Condition (M3) gives the behavior of the joint distribution of $(d(X_{s,T},x), d(X_{t,T},x))$ near the origin. Assumptions (M1), (M2) and (M3) can be satisfied by a class of random coefficient models: For each $u \in [0,1]$, let $\{H(u,t) = (H_1(u,t), \dots, H_d(u,t))'\}_{t \in \mathbb{Z}}$ be a $d$-variate ($\alpha$-mixing) stationary time series with independent components. Assume that there exist random variables $\bar{H}_k(u,t,T)$, $k=1,\dots,d$ such that 
\begin{align}\label{LS-d}
|H_k(t/T, t) - H_k(u,t)| &\leq \left(\left|{t \over T} - u\right| + {1 \over T}\right)\bar{H}_{k}(u,t,T),\ E[\bar{H}_k(u,t,T)^2] \leq C<\infty
\end{align} 
for some positive constant $C$ independent of $u,t$, and $T$. Note that we can construct locally stationary time series $H_k(t/T,t)$, $k=1,\dots, d$ that satisfy (\ref{LS-d}) (see \cite{Vo12} for example). 
Let $\{b_k(s)\}_{k=1}^{\infty}$ be an orthogonal basis of $L_{\mathbb{R}}^2([0,1])$. Define $X_t^{(u)}(s) = \sum_{k=1}^{d}H_k(u,t)b_k(s)$, $X_{t,T}(s) = X_t^{(t/T)}(s)$. Then we have
\begin{align*}
\|X_{t,T} - X_t^{(u)}\| &\!=\! \left(\sum_{k=1}^{d}(H_k(t/T,t) - H_k(u,t))^2\right)^{1/2}\!\!\!\! \leq \left(\left|{t \over T} - u\right| + {1 \over T}\right)\left(\sum_{k=1}^{d}\bar{H}_k^2(u,t,T)\!\right)^{1/2}.
\end{align*}
This implies that $X_{t,T}$ is a locally stationary functional time series. Further, the distributions of the functional time series $X_t^{(u)}$ and $X_{t,T}$ are completely determined by the multivariate time series $H(u,t)$ and $H(t/T,t)$, respectively. In this case, the functions $\phi(h)$ and $\psi(h)$ in Assumptions (M2) and (M3) can be $\phi(h) \sim h^d$ and $\psi(h) \sim h^{2d}$.   Condition (M4) is concerned with the smoothness and continuity of the regression function $m(u,x)$ with respect to $u$ and $x$, respectively. Conditions (M3) and (M4) are consistent with the assumptions made by \cite{Ma05} and \cite{FeVi06}.

We assume the following conditions on $\sigma$ and kernel functions. Similar assumptions are made by \cite{Ma05} and \cite{FeVi06}, and \cite{Vo12}.

\begin{assumption}\label{Ass-Sig}
\begin{enumerate}
\item[($\Sigma$1)] $\sigma:[0,1]\times \mathscr{H} \to \mathbb{R}$ is bounded by some constant $C_{\sigma}<\infty$ from above and by some constant $c_{\sigma}>0$ from below, that is, $0<c_{\sigma}\leq\sigma(u,x)\leq C_{\sigma}<\infty$ for all $u$ and $x$. 
\item[($\Sigma$2)] $\sigma$ is Lipschitz continuous with respect to $u$
\item[($\Sigma$3)] $\sup_{u \in [0,1]}\sup_{y:d(x,y)\leq h}|\sigma(u,x)-\sigma(u,y)|=o(1)$ as $h \to 0$.
\end{enumerate}
\end{assumption}

\begin{assumption}\label{Ass-KB}
\begin{enumerate}
\item[(KB1)] The kernel $K_{1}$ is symmetric around zero, bounded, and has a compact support, that is, $K_{1}(v) = 0$ for all $|v|>C_{1}$ for some $C_{1}<\infty$. Moreover, $\int K_{1}(z)dz=1$ and $K_{1}$ is Lipschitz continuous, that is, $|K_{1}(v_{1}) - K_{1}(v_{2})| \leq C_{2}|v_{1} - v_{2}|$ for some $C_{2}<\infty$ and all $v_{1},v_{2} \in \mathbb{R}$. 
\item[(KB2)] The kernel $K_{2}$ is nonnegative, bounded, and has support in $[0,1]$ such that $0<K_{2}(0)$ and $K_{2}(1)=0$. Moreover, $K'_{2}(v)=dK_{2}(v)/dv$ exists on $[0,1]$ and satisfies $C'_{1}\leq K'_{2}(v) \leq C'_{2}$ for two real constants $-\infty<C'_{1}<C'_{2}<0$.
\end{enumerate}
\end{assumption}
Conditions ($\Sigma$1) and ($\Sigma$2) are consistent with the assumption made by \cite{Vo12}. Condition ($\Sigma$3) is required for investigating the asymptotic property of the variance of $\hat{m}(u,x)$ to establish its asymptotic normality.
The assumption on kernel functions $K_{1}$ and $K_{2}$ are standard in the literature and satisfied by popular kernel functions, such as the (asymmetric) triangle and quadratic kernels. 

\subsection{Uniform convergence rates for kernel estimators}

As a first step to study the asymptotic properties of the estimator (\ref{def: m-est}), we analyze the following general kernel estimator: 
\begin{align}\label{def: genKernel-est}
\hat{\psi}(u,x) &= {1 \over Th\phi(h)}\sum_{t=1}^{T}K_{1,h}\left(u - {t \over T}\right)K_{2,h}\left(d(x, X_{t,T})\right)W_{t,T},
\end{align}
where $\{W_{t,T}\}$ is an array of one-dimensional random variables. Some kernel estimators, such as NW estimators, can be represented by (\ref{def: genKernel-est}). In this study, we use the results with $W_{t,T}=1$ and $W_{t,T} = \varepsilon_{t,T}$. 

Next, we derive the uniform convergence rate of $\hat{\psi}(u, x) - E[\hat{\psi}(u,x)]$. We assume the following for the components in (\ref{def: genKernel-est}). 

\begin{assumption}\label{Ass-E}
\begin{enumerate}
\item[(E1)] It holds that $\sup_{t,T}\sup_{x \in \mathscr{H}}E[|W_{t,T}|^{\zeta}|X_{t,T}=x] \leq C$ for some $\zeta > 2$ and $C <\infty$. 
\item[(E2)] The $\alpha$-mixing coefficients of the array $\{X_{t,T}, W_{t,T}\}$ satisfy $\alpha(k) \leq Ak^{-\gamma}$ for some $A>0$ and $\gamma> 2$. We also assume that $\delta+1<\gamma(1-{2 \over \nu})$ for soe $\nu> 2$ and $\delta>1-{2 \over \nu}$, and 
\begin{align}\label{mixing-CLT}
h^{2(1\wedge \beta)-1}\left(\phi(h)\lambda_{T} + \sum_{k=\lambda_{T}}^{\infty}k^{\delta}(\alpha(k))^{1-{2 \over \nu}}\right) \to 0,
\end{align}
as $T \to \infty$, where $\lambda_{T}=[(\phi(h))^{-(1-{2 \over \nu})/\delta}]$. 
\end{enumerate}
\end{assumption}

We need (\ref{mixing-CLT}) to establish the asymptotic normality of $\hat{m}(u,x)$. We also need Condition (\ref{mixing-CLT}) to show the asymptotic negligibility of the bias of $\hat{m}(u,x)$. Condition (E2) is consistent with assumptions made in \cite{Ma05} and \cite{Vo12}.

Furthermore, we assume the following regularity conditions on $h$ and $\phi(h)$. 
\begin{assumption}\label{Ass-R}
As $T \to \infty$, 
\begin{enumerate}
\item[(R1)] ${(\log T)^{{\gamma \over 2}+\zeta_{0}(\gamma +1)} \over T^{{\gamma \over 2}-1-{\gamma + 1 \over \zeta}}h^{{\gamma \over 2}+1}\phi(h)^{{\gamma \over 2}}} \to 0$ for some $\zeta_{0}>0$, and 
\item[(R2)] $Th^{3}, Th\phi(h) \to \infty$,
\end{enumerate}
where $\zeta$ and $\gamma$ are positive constants that appear in Assumption \ref{Ass-E}.   
\end{assumption}

Condition (R1) is required to apply an exponential inequality for $\alpha$-mixing sequence to establish the uniform convergence rate of the general estimator and $\hat{m}(u,x)$. Condition (R2) is concerned with the bias and the convergence rate of the general estimator $\hat{\psi}(u,x)$.


The next theorem generalizes the uniform convergence results of \cite{Vo12} to a functional time series.

\begin{proposition}\label{Prop: unif-rate}
Assume that Assumptions \ref{Ass-M} (M1), (M2), \ref{Ass-KB}, \ref{Ass-E}, and \ref{Ass-R} are satisfied. Then the following result holds for any $x \in \mathscr{H}$: 
\begin{align*}
\sup_{u \in [0,1]}|\hat{\psi}(u,x) - E[\hat{\psi}(u,x)]| &= O_{p}\left(\sqrt{{\log T  \over Th\phi(h)}}\right).
\end{align*}
\end{proposition}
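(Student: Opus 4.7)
The plan is to reduce the supremum over the continuum $u \in [0,1]$ to a maximum over a finite grid, and then control each grid point by an exponential inequality for $\alpha$-mixing arrays combined with a truncation argument on $W_{t,T}$.

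First I would discretize: cover $[0,1]$ with points $u_1,\dots,u_N$ where $N = N_T$ is chosen polynomially in $T$ (e.g.\ $N \sim T^{a}$ for a suitable $a>1$). For any $u$ in a cell of radius $1/N$ around $u_k$, the Lipschitz property of $K_1$ from (KB1) gives
\[
|K_{1,h}(u - t/T) - K_{1,h}(u_k - t/T)| \leq \frac{C_2}{hN},
\]
so that
\[
|\hat{\psi}(u,x) - \hat{\psi}(u_k,x)| \leq \frac{C_2}{Nh^{2}\phi(h)}\cdot\frac{1}{T}\sum_{t=1}^{T} K_{2,h}(d(x,X_{t,T}))|W_{t,T}|.
\]
Using (M2) and (E1) the expectation of the sum on the right is $O(\phi(h))$, so with $N$ large enough (polynomially in $T$) the discretization error and its expectation become $o(\sqrt{\log T/(Th\phi(h))})$. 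The same bound applies to $|E\hat{\psi}(u,x)-E\hat{\psi}(u_k,x)|$.

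Second, I would reduce to a bounded summand. Setting $Z_{t,T}(u,x) := K_{1,h}(u-t/T)K_{2,h}(d(x,X_{t,T}))W_{t,T}$, I split $W_{t,T} = W_{t,T}\mathbf{1}\{|W_{t,T}|\leq b_T\} + W_{t,T}\mathbf{1}\{|W_{t,T}|>b_T\}$ with a truncation level $b_T$ growing like a small power of $T$ (tuned via $\zeta$ from (E1)). The tail part is controlled in $L_1$ by Markov's inequality using $E|W_{t,T}|^{\zeta} \leq C$ and (M2), while the truncated part gives us a bounded, centered, $\alpha$-mixing summand to which a Bernstein-type exponential inequality for strongly mixing sequences (as in Liebscher or Bosq) can be applied.

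Third, for each fixed $u_k$ I estimate the variance of the truncated $\sum_t Z_{t,T}(u_k,x)$. The diagonal terms give contribution $O(Th\phi(h))$ via $EK_{2,h}(d(x,X_{t,T}))^{2} \lesssim \phi(h)$ (by (M2) and the boundedness of $K_2$), and the off-diagonal terms are controlled either by (M3) (giving $\psi(h) \lesssim \phi(h)^2$ terms) or by the mixing covariance inequality using (E2). Plugging these bounds into the exponential inequality yields, for an appropriate constant $c>0$,
\[
P\Big(|\hat{\psi}(u_k,x)-E\hat{\psi}(u_k,x)| > \eta\sqrt{\tfrac{\log T}{Th\phi(h)}}\Big) \leq 2\exp(-c\eta^{2}\log T) + (\text{mixing tail term}),
\]
and a union bound over the $N$ grid points, combined with (R1) (whose purpose is precisely to make the mixing-tail term negligible at this grid size) and (R2), gives the claimed rate.

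The hard part will be the bookkeeping in Step 3: choosing the truncation level $b_T$, the block size in the exponential inequality, and the grid size $N$ \emph{simultaneously} so that (i) the mixing tail contribution disappears at the $\sqrt{\log T/(Th\phi(h))}$ scale, (ii) the moment condition $\zeta > 2$ from (E1) is sharp enough (this is where (R1) enters), and (iii) the discretization error is negligible. Once these quantities are balanced, the result follows by a straightforward union bound.
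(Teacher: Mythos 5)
Your plan follows the paper's proof essentially step for step: truncate $W_{t,T}$ at a level $\tau_T\sim(\log T)^{\zeta_0}T^{1/\zeta}$ (tail controlled by Markov's inequality via (E1) and the small-ball bound (M2)), discretize $[0,1]$, apply Liebscher's exponential inequality for $\alpha$-mixing arrays with the variance bound $\sigma^2_{S_T,T}\lesssim S_T h\phi(h)$ obtained from (M2)/(M3) as in Masry (2005), and union-bound using (R1) to kill the mixing-tail term. The one calibration to watch is the grid size: the paper takes $N\lesssim h^{-1}a_T^{-1}$ with $a_T=\sqrt{\log T/(Th\phi(h))}$, preserving locality in $u$ through an auxiliary kernel $K^{\ast}$ rather than your cruder global bound on the discretization error, and (R1) is tuned to exactly that $N$ --- a strictly larger polynomial grid would force you to strengthen (R1).
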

Apart from $h$, which comes from the smoothing in time direction, the convergence rate in the above proposition is the same as the point-wise convergence rate of the (nonparametric) regression function obtained in \cite{FeVi06} for a strictly stationary functional time series.
The next theorem provides the uniform convergence rate of the kernel estimator $\hat{m}(u,x)$. 
\begin{theorem}\label{Thm: unif-rate}
Assume that Assumptions \ref{Ass-M}, \ref{Ass-Sig}, \ref{Ass-KB}, and \ref{Ass-R} are satisfied and that Assumption \ref{Ass-E} is satisfied with $W_{1,T}=1$ and $W_{t,T}=\varepsilon_{t,T}$. Then, the following result holds for any $x \in \mathscr{H}$:
\begin{align}\label{unif-rate-m}
\sup_{u \in [C_{1}h,1-C_{1}h]}|\hat{m}(u,x) - m(u,x)|&= O_{p}\left(\sqrt{\log T \over Th\phi(h)} + h^{2\wedge\beta}\right). 
\end{align}
\end{theorem}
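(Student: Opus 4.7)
The plan is to express $\hat m - m$ as a ratio, bound the denominator below uniformly, and split the numerator into a noise term (treated by Proposition~\ref{Prop: unif-rate}) and a bias term (treated by the H\"older condition in $x$ and a Taylor expansion in time exploiting the symmetry of $K_{1}$). Writing $\hat\psi_{1}$ for $\hat\psi$ with $W_{t,T}=1$, $\hat\psi_{\varepsilon}$ for $\hat\psi$ with $W_{t,T}=\varepsilon_{t,T}$, and
\[
N_{2}(u,x) := \frac{1}{Th\phi(h)}\sum_{t=1}^{T}K_{1,h}(u-t/T)K_{2,h}(d(x,X_{t,T}))\bigl[m(t/T,X_{t,T})-m(u,x)\bigr],
\]
we get $\hat m(u,x)-m(u,x) = \{\hat\psi_{\varepsilon}(u,x)+N_{2}(u,x)\}/\hat\psi_{1}(u,x)$.

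First, Proposition~\ref{Prop: unif-rate} with $W_{t,T}=1$ yields $\sup_{u}|\hat\psi_{1}-E[\hat\psi_{1}]|=O_{p}(\sqrt{\log T/(Th\phi(h))})=o_{p}(1)$, and integration by parts of $K_{2}$ against the small-ball distribution function of $d(x,X_{t}^{(u)})$, combined with (M2), (KB2), and~(\ref{small-Ball-low}), gives $E[\hat\psi_{1}(u,x)]\gtrsim f_{1}(x)$ uniformly in $u$. Hence $\hat\psi_{1}(u,x)$ is bounded below on $[C_{1}h,1-C_{1}h]$ with probability tending to one. For the noise term, $E[\hat\psi_{\varepsilon}]=0$ by the independence of $\{\varepsilon_{t}\}$ and $\{X_{t,T}\}$ together with $E[\varepsilon_{t}]=0$, so Proposition~\ref{Prop: unif-rate} with $W_{t,T}=\varepsilon_{t,T}$ gives $\sup_{u}|\hat\psi_{\varepsilon}(u,x)|=O_{p}(\sqrt{\log T/(Th\phi(h))})$.

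For $N_{2}$, split $m(t/T,X_{t,T})-m(u,x)=[m(t/T,X_{t,T})-m(t/T,x)]+[m(t/T,x)-m(u,x)]$. By (M4) the spatial increment is bounded by $c_{m}h^{\beta}$ on the support of $K_{2,h}$, contributing $O_{p}(h^{\beta})$. For the temporal increment, a second-order Taylor expansion gives $m(t/T,x)-m(u,x)=\partial_{u}m(u,x)(t/T-u)+\tfrac{1}{2}\partial_{u}^{2}m(\tilde u,x)(t/T-u)^{2}$; the quadratic remainder contributes $O_{p}(h^{2})$ since $|t/T-u|\leq C_{1}h$ on the support of $K_{1,h}$, while the linear term equals $\partial_{u}m(u,x)\cdot A(u,x)$, where $A$ has the same structure as $\hat\psi$ but with the antisymmetric, bounded, Lipschitz, compactly supported weight $\tilde K_{1}(v)=vK_{1}(v)$ in place of $K_{1}$ and an extra multiplicative $-h$. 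Since the proof of Proposition~\ref{Prop: unif-rate} uses only these properties of $K_{1}$, an identical argument applied to $A$ yields $\sup_{u}|A-E[A]|=O_{p}(h\sqrt{\log T/(Th\phi(h))})$, and the symmetry identity $\int vK_{1}(v)dv=0$ combined with a bounded-variation Riemann-sum bound and the local-stationarity approximation $X_{t,T}\approx X_{t}^{(u)}$ on $|t/T-u|\leq C_{1}h$ yields $|E[A(u,x)]|=O(1/T)$, dominated by $h^{2}$ under (R2). Summing all pieces, $\sup_{u}|N_{2}(u,x)|=O_{p}(h^{2\wedge\beta}+\sqrt{\log T/(Th\phi(h))})$, and dividing by the uniform lower bound on $\hat\psi_{1}$ gives~(\ref{unif-rate-m}).

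The main obstacle is the first-order temporal term in the bias: a naive estimate using $|t/T-u|\leq C_{1}h$ yields only $O_{p}(h)$ and would produce the suboptimal rate $h^{1\wedge\beta}$. Extracting the extra factor of $h$ needed for $h^{2\wedge\beta}$ requires simultaneously the symmetry of $K_{1}$, an application of Proposition~\ref{Prop: unif-rate} to the signed weight $\tilde K_{1}$, and careful control of both the Riemann-sum error and the local-stationarity approximation error.
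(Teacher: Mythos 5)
Your overall architecture coincides with the paper's: the same ratio decomposition, Proposition \ref{Prop: unif-rate} applied with $W_{t,T}=1$ and $W_{t,T}=\varepsilon_{t,T}$ for the stochastic parts, the small-ball lower bound for the denominator, the H\"older condition (M4) for the $h^{\beta}$ contribution, and symmetry of $K_{1}$ plus a second-order Taylor expansion in time for the $h^{2}$ contribution. Your one organizational difference --- Taylor-expanding first and then applying Proposition \ref{Prop: unif-rate} to the linear term $A$ with the modified weight $\tilde K_{1}(v)=vK_{1}(v)$, rather than centering the whole quantity $\hat g_{2}-m\hat m_{1}$ and bounding its expectation afterwards as the paper does --- is legitimate, since the proof of the proposition uses only boundedness, compact support and Lipschitz continuity of $K_{1}$, not $\int K_{1}=1$ or symmetry.

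There is, however, a genuine gap in your treatment of $E[A(u,x)]$. You invoke ``the local-stationarity approximation $X_{t,T}\approx X_{t}^{(u)}$ on $|t/T-u|\le C_{1}h$,'' but Definition \ref{def: LSfTS} only gives $d(X_{t,T},X_{t}^{(u)})\le(|t/T-u|+1/T)U_{t,T}^{(u)}\lesssim h\,U_{t,T}^{(u)}$ over that window, so the Lipschitz bound for $K_{2}$ yields $E\bigl[\bigl|K_{2,h}(d(x,X_{t,T}))-K_{2,h}(d(x,X_{t}^{(u)}))\bigr|\bigr]=O(1)$, which is not $o(\phi(h))$; after the normalization by $\phi(h)$ the resulting error in $E[A]$ is of order $h/\phi(h)$, which diverges rather than being $O(1/T)$. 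The paper avoids this by approximating each $X_{t,T}$ by $X_{t}^{(t/T)}$ (its own rescaled time point), for which the error is $T^{-1}U_{t,T}^{(t/T)}$, hence $O(1/(Th))$ per term after dividing by $h$; these errors are isolated in the terms $Q_{1},Q_{2},Q_{3}$ and shown to be $O\bigl(T^{-1}h^{(1\wedge\beta)-1}\phi^{-1}(h)\bigr)\ll h^{2\wedge\beta}$. The same correction is needed in your lower bound for $E[\hat\psi_{1}]$, since (M2) is a statement about the stationary approximations $X_{t}^{(u)}$ and not about $X_{t,T}$ itself. Once you work with $X_{t}^{(t/T)}$, the symmetry cancellation $\int vK_{1}(v)\,dv=0$ for $E[A]$ additionally requires $E[K_{2,h}(d(x,X_{t}^{(t/T)}))]$ to be essentially constant in $t$ over the window (the paper passes over this point when bounding $Q_{4}$ via Lemma \ref{lem: K}), but at least the fatal $O(1)$-per-term error disappears. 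You should restate the approximation step accordingly.
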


Theorem \ref{Thm: unif-rate} generalizes the results on point-wise convergence in \cite{FeVi06} and the results in \cite{Ma05} for a strictly stationary functional time series case to our setting. Using Proposition \ref{Prop: unif-rate} the stochastic part is shown to be of order $O_{p}(\sqrt{\log T / Th\phi(h)})$. Compared with Theorem 4.2 in \cite{Vo12}, we do not have the bias term that comes from the approximation error of $X_{t,T}$ by $X_{t}^{(u)}$. Indeed, under our assumptions, the approximation error is $O\left(T^{-1}h^{(1 \wedge \beta)-1}\phi^{-1}(h)\right) \ll h^{2 \wedge \beta}$.
\begin{remark}
For a fractal-type process $\{X_{t}^{(u)}\}$, the right-hand side of (\ref{unif-rate-m}) with $\beta \leq 2$ is optimized by choosing $h \sim \left({\log T \over T}\right)^{{1 \over 2\beta+\tau+1}}$, and the optimized rate is 
\[
\sup_{u \in [C_{1}h,1-C_{1}h]}|\hat{m}(u,x) - m(u,x)|= O_{p}\left( \left({\log T \over T}\right)^{{\beta \over  2\beta+\tau+1}}\right). 
\]
\end{remark}

\subsection{Asymptotic normality for kernel estimators}
In this section, we provide a central limit theorem for the kernel estimator $\hat{m}(u,x)$. To establish the asymptotic normality of the NW estimator $\hat{m}(u,x)$, we additionally make the following assumption, which is used to employ Bernstein's big-block and small-block procedure.   

\begin{assumption}\label{Ass-block}
There exists a sequence of positive integers $\{v_{T}\}$ satisfying $v_{T}\to \infty$, $v_{T}=o(\sqrt{Th\phi(h)})$ and $\sqrt{{T \over h\phi(h)}}\alpha(v_{T}) \to 0$ as $T \to \infty$. 
\end{assumption}

Observe that 
\begin{align*}
\hat{m}(u, x) - m(u,x) &= {1 \over \hat{m}_{1}(u, x)}\left(\hat{g}_{1}(u,x) + \hat{g}_{2}(u,x) - m(u,x)\hat{m}_{1}(u,x)\right)\\
&= {1 \over \hat{m}_{1}(u, x)}\left(\hat{g}_{1}(u,x) + \hat{g}^{B}(u,x)\right),
\end{align*}
where
\begin{align*}
\hat{m}_{1}(u,x) &= {1 \over Th\phi(h)}\sum_{t=1}^{T}K_{1,h}\left(u - {t \over T}\right)K_{2,h}\left(d\left(x, X_{t,T}\right)\right),\\
\hat{g}_{1}(u,x) &= {1 \over Th\phi(h)}\sum_{t=1}^{T}K_{1,h}\left(u - {t \over T}\right)K_{2,h}\left(d\left(x, X_{t,T}\right)\right)\varepsilon_{t, T},\\
\hat{g}_{2}(u,x) &= {1 \over Th\phi(h)}\sum_{t=1}^{T}K_{1,h}\left(u - {t \over T}\right)K_{2,h}\left(d\left(x, X_{t,T}\right)\right)m\left({t \over T} , X_{t,T}\right). 
\end{align*}
Under the same assumption in Theorem \ref{Thm: unif-rate}, we can show that $\Var(\hat{g}^{B}(u,x))=o\left({1 \over Th\phi(h)}\right)$ and $1/\hat{m}_{1}(u,x)=O_{p}(1)$. See the proof of Theorem \ref{Thm: normality} for details. Then, we have
\begin{align*}
\hat{m}(u, x) - m(u,x) &= {\hat{g}_{1}(u,x) \over \hat{m}_{1}(u, x)} + B_{T}(u,x) + o_{p}\left(\sqrt{{1 \over Th\phi(h)}}\right),
\end{align*}
where $B_{T}(u,x)=E[\hat{g}^{B}(u,x)]/E[\hat{m}_{1}(u,x)]$ is the ``bias'' term and ${\hat{g}_{1}(u,x) \over \hat{m}_{1}(u, x)}$ is the ``variance'' term. 

In the following result, we set $K_{2}$ as the asymmetrical triangle kernel, that is, $K_{2}(x) = (1-x)I(x \in [0,1])$ to simplify the proof. 
\begin{theorem}\label{Thm: normality}
Assume that Assumptions  \ref{Ass-M}, \ref{Ass-Sig}, \ref{Ass-KB}, \ref{Ass-R}, and \ref{Ass-block} are satisfied and that Assumption \ref{Ass-E} is satisfied for both $W_{1,T}=1$ and $W_{t,T}=\varepsilon_{t,T}$. Then as $T\to \infty$, the following result holds for any $x \in \mathscr{H}$:
\begin{align*}
\sqrt{Th\phi(h)}(\hat{m}(u,x) - m(u,x)-B_{T}(u,x))&\stackrel{d}{\to} N(0, V(u,x)),
\end{align*}
where $B_{T}(u,x) = O(h^{2\wedge \beta})$ and 
\[
V(u,x) = \lim_{T \to \infty}Th\phi(h){\Var\left(\hat{g}_{1}(u,x)\right) \over E[\hat{m}_{1}(u,x)]}>0.
\]
\end{theorem}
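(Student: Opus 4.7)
My plan is to reduce the claim to a central limit theorem for the stochastic term $\hat{g}_1(u,x)/\hat{m}_1(u,x)$ and control the ``bias'' term $\hat{g}^B(u,x)/\hat{m}_1(u,x)$ deterministically. Applying Proposition~\ref{Prop: unif-rate} with $W_{t,T}=1$ gives $\hat{m}_1(u,x)=E[\hat{m}_1(u,x)]+o_p(1)$; by (M2), $E[\hat{m}_1(u,x)]$ is bounded away from zero, so $1/\hat{m}_1(u,x)=1/E[\hat{m}_1(u,x)]+o_p(1)$. By Slutsky it suffices to establish: (a) $B_T(u,x)=O(h^{2\wedge\beta})$; (b) $\Var(\hat{g}^B(u,x))=o((Th\phi(h))^{-1})$, which combined with (a), Proposition~\ref{Prop: unif-rate} and Chebyshev yields the $o_p((Th\phi(h))^{-1/2})$ remainder in the displayed decomposition preceding the theorem; and (c) the CLT $\sqrt{Th\phi(h)}\hat{g}_1(u,x)/E[\hat{m}_1(u,x)]\stackrel{d}{\to}N(0,V(u,x))$.

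For (a) and (b), I would decompose $m(t/T,X_{t,T})-m(u,x)=[m(t/T,X_{t,T})-m(u,X_{t,T})]+[m(u,X_{t,T})-m(u,x)]$. A second-order Taylor expansion in time of the first bracket, together with the symmetry of $K_1$ around zero in (KB1), kills the $O(h)$ term upon averaging and leaves $O(h^2)$ coming from the second derivative supplied by (M4); the second bracket is $O(h^\beta)$ on the support $\{d(X_{t,T},x)\le h\}$ by the H\"older condition in (M4). Taking expectation and dividing by $E[\hat{m}_1(u,x)]$ proves (a). For (b), the additional factor $|m(t/T,X_{t,T})-m(u,x)|^2=O(h^{2(1\wedge\beta)})$ on the joint kernel support shrinks the diagonal variance to $O(h^{2(1\wedge\beta)}/(Th\phi(h)))=o((Th\phi(h))^{-1})$, while off-diagonal contributions are bounded by Davydov's covariance inequality using (M3) and (E2), along the same lines as in the proof of Proposition~\ref{Prop: unif-rate}.

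The heart of the argument is (c). Because $\{\varepsilon_t\}$ is i.i.d., mean-zero, and independent of $\{X_{t,T}\}$, all off-diagonal terms in $\Var(\hat{g}_1)$ vanish, leaving
\begin{align*}
Th\phi(h)\Var(\hat{g}_1) = \frac{1}{Th\phi(h)}\sum_{t=1}^{T}K_{1,h}^2\!\left(u-\tfrac{t}{T}\right) E\!\left[K_{2,h}^2(d(x,X_{t,T}))\sigma^2(t/T,X_{t,T})\right] E[\varepsilon_1^2].
\end{align*}
Using (M1) to replace $X_{t,T}$ by $X_t^{(u)}$, ($\Sigma$2)--($\Sigma$3) to pin $\sigma(t/T,\cdot)$ to $\sigma(u,x)$ at scale $h$, the integration-by-parts identity $E[K_2^2(d(x,X_t^{(u)})/h)]=-\int_0^1 2K_2(s)K_2'(s)F_u(hs;x)\,ds$ together with the small-ball expansion (M2), and a Riemann-sum approximation in time, the right-hand side converges and identifies its limit with $V(u,x)E[\hat{m}_1(u,x)]$ in the stated form of $V$. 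For the distributional conclusion I would apply Bernstein's big-block/small-block decomposition: partition $\{1,\dots,T\}$ into alternating big blocks of length $p_T\sim\sqrt{Th\phi(h)}/v_T$ and small blocks of length $q_T=v_T$ from Assumption~\ref{Ass-block}. Small-block contributions have vanishing variance by the same diagonal computation; the joint law of the big blocks is coupled to an independent sequence at total-variation cost $\lesssim k_T\alpha(v_T)\to 0$ by Assumption~\ref{Ass-block}; Lindeberg's condition for the independent surrogate is verified via Lyapunov's criterion using the conditional $\zeta$-th-moment bound in (E1) with $\zeta>2$ and the kernel scaling. The main obstacle is this coupling/Lindeberg step in the functional small-ball regime: each block's variance is driven by the small-ball probability $\phi(h)$ rather than a pointwise density, so (M2)--(M3) and the kernel identity above must be propagated carefully through every block and truncation. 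Slutsky applied to $\sqrt{Th\phi(h)}\hat{g}_1/\hat{m}_1$ then completes the proof.
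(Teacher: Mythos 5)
Your proposal is correct and follows essentially the same route as the paper: the same decomposition into $\hat{g}_1(u,x)+\hat{g}^B(u,x)$ with $1/\hat{m}_1(u,x)=O_p(1)$, the same variance bound for $\hat{g}^B$ (diagonal shrinkage by $h^{2(1\wedge\beta)}$ plus (M3) for near pairs and Davydov with (E2) for far pairs), the same identification of $\lim Th\phi(h)\Var(\hat{g}_1)$ via vanishing off-diagonals, integration by parts, and the small-ball condition, and the same Bernstein big-block/small-block argument (which the paper itself only sketches, deferring to Masry (2005)). The only cosmetic difference is that you approximate by $X_t^{(u)}$ where the paper uses $X_t^{(t/T)}$, which does not affect the argument.
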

Theorem \ref{Thm: normality} is an extension of the results in \cite{Ma05} and \cite{Vo12} to a locally stationary functional time series.  
In particular, the bias and variance expressions $B_{T}(u,x)$ and $V(u,x)$ are very similar to those in \cite{Ma05}. By requiring that $Th^{1+2(2 \wedge \beta)}\phi(h) \to 0$, the  bias $B_{T}(u,x)$ term is asymptotically negligible.
\section{Concluding remarks}

In this paper, we have developed an asymptotic theory for nonparametric regression models with time-varying regression function with locally stationary functional covariate. In particular, we derived uniform convergence rates of general kernel estimators and the NW estimator of the regression function. We also established a central limit theory of the NW estimator.

As discussed in \cite{Vo12}, it would be possible to provide the uniform convergence rate of $\hat{m}(u,x)$ over $(1-C_{1}h, 1] \times \{x\}$, which is important for forecasting purposes by using
boundary-corrected kernels or one-sided kernels. In both cases, we have to ensure that the kernels are compactly supported and they are
Lipschitz continuous to get the theory to work.

\section*{Acknowledgements}

D. Kurisu is partially supported by JSPS KAKENHI Grant Number 20K13468. I am grateful to Taisuke Otsu for his helpful comments. 

\newpage

\appendix

\section{Proofs}\label{Appendix: proof}

\subsection{Proofs for Section \ref{Sec: Main}}

\begin{proof}[Proof of Proposition \ref{Prop: unif-rate}]
Define $B = [0,1]$, $a_{T} = \sqrt{\log T/Th\phi(h)}$ and $\tau_{T} = \rho_{T}T^{1/\zeta}$ with $\rho_{T} = (\log T)^{\zeta_{0}}$ for some $\zeta_{0} >0$. Define 
\begin{align*}
\hat{\psi}_{1}(u,x) &= {1 \over Th\phi(h)}\sum_{t=1}^{T}K_{1,h}\left(u - {t \over T}\right)K_{2,h}\left(d\left(x, X_{t,T}\right)\right)W_{t,T}I(|W_{t,T}| \leq \tau_{T}),\\
\hat{\psi}_{2}(u,x) &= {1 \over Th\phi(h)}\sum_{t=1}^{T}K_{1,h}\left(u - {t \over T}\right)K_{2,h}\left(d\left(x, X_{t,T}\right)\right)W_{t,T}I(|W_{t,T}| > \tau_{T}).
\end{align*}
Note that $\hat{\psi}(u,x) - E[\hat{\psi}(u,x)] = \hat{\psi}_{1}(u,x) - E[\hat{\psi}_{1}(u,x)] + \hat{\psi}_{2}(u,x) - E[\hat{\psi}_{2}(u,x)]$.

(Step1) First we consider the term $\hat{\psi}_{2}(u,x) - E[\hat{\psi}_{2}(u,x)]$. 
\begin{align*}
P\left(\sup_{u \in B}|\hat{\psi}_{2}(u,x)|>a_{T}\right) &\leq P\left(|W_{t,T}| > \tau_{T}\ \text{for some $t=1,\hdots,T$}\right)\\
&\leq \tau_{T}^{-\zeta}\sum_{t=1}^{T}E[|W_{t,T}|^{\zeta}] \leq T\tau_{T}^{-\zeta} = \rho_{T}^{-\zeta} \to 0.
\end{align*}

\begin{align*}
E\left[|\hat{\psi}_{2}(u,x)|\right] &\leq {1 \over Th\phi(h)}\sum_{t=1}^{T}K_{1,h}\left(u - {t \over T}\right)E\left[K_{2,h}\left(d\left(x,X_{t,T}\right)\right)|W_{t,T}|I(|W_{t,T}|> \tau_{T})\right].
\end{align*}
Since
\begin{align*}
K_{2,h}\left(d\left(x,X_{t,T}\right)\right) &\leq \left|K_{2,h}\left(d\left(x,X_{t,T}\right)\right)-K_{2,h}\left(d\left(x,X_{t}^{(t/T)}\right)\right)\right| + K_{2,h}\left(d\left(x,X_{t}^{(t/T)}\right)\right)\\
&\leq h^{-1}\left|d\left(x,X_{t,T}\right)-d\left(x,X_{t}^{(t/T)}\right)\right| + K_{2,h}\left(d\left(x,X_{t}^{(t/T)}\right)\right)\\
&\leq h^{-1}d\left(X_{t,T}, X_{t}^{(t/T)}\right) + K_{2,h}\left(d\left(x,X_{t}^{(t/T)}\right)\right)\\
&\leq {1 \over Th}U_{t,T}^{(t/T)} + K_{2,h}\left(d\left(x,X_{t}^{(t/T)}\right)\right)
\end{align*}
and 
\begin{align*}
E\left[K_{2,h}\left(d\left(x,X_{t,T}\right)\right)|W_{t,T}|I(|W_{t,T}|> \tau_{T})\right] &\lesssim \tau_{T}^{-(\zeta-1)}E\left[K_{2,h}\left(d\left(x,X_{t,T}\right)\right)|W_{t,T}|^{\zeta}\right]\\
&\lesssim \tau_{T}^{-(\zeta-1)}E\left[K_{2,h}\left(d\left(x,X_{t,T}\right)\right)\right],
\end{align*}
we have 
\begin{align*}
&E\left[K_{2,h}\left(d\left(x,X_{t,T}\right)\right)|W_{t,T}|I(|W_{t,T}|> \tau_{T})\right]\\
&\quad \lesssim {1 \over Th\tau_{T}^{\zeta-1}}E[U_{t,T}^{(t/T)}] + \tau_{T}^{-\zeta+1}E\left[K_{2,h}\left(d\left(x,X_{t}^{(t/T)}\right)\right)\right]\\
&\quad \lesssim {1 \over Th\tau_{T}^{\zeta-1}} + \tau_{T}^{-\zeta+1}E[I(d(x,X_{t}^{(t/T)})\leq h)] \leq {1 \over Th\tau_{T}^{\zeta}} + \tau_{T}^{-\zeta}F_{t/T}(h;x) \lesssim \tau_{T}^{-\zeta+1}\phi(h).
\end{align*}
Then we have 
\begin{align*}
E\left[|\hat{\psi}_{2}(u,x)|\right] &\lesssim \tau_{T}^{-\zeta+1}\phi(h) {1 \over Th\phi(h)}\sum_{t=1}^{T}K_{1,h}\left(u - {t \over T}\right)\\
&\lesssim {1 \over \tau_{T}^{\zeta-1}}{1 \over Th}\sum_{t=1}^{T}K_{1,h}\left(u - {t \over T}\right) \lesssim {1 \over \tau_{T}^{\zeta-1}} = \rho_{T}^{-(\zeta-1)}T^{-{\zeta-1 \over \zeta}} \lesssim a_{T}. 
\end{align*}
For the third inequality, we used Lemma \ref{lem: g} below. 
As a result, $\sup_{u \in B}|\hat{\psi}_{2}(u,x) - E[\hat{\psi}_{2}(u,x)]| = O_{p}(a_{T})$. 

(Step2) Now we show $\sup_{u \in B}\left|\hat{\psi}_{1}(u,x) - E[\hat{\psi}_{1}(u,x)]\right| = O_{p}\left(a_{T}\right)$. Cover the region $B$ with $N \lesssim h^{-1}a_{T}^{-1}$ balls $B_{k,T} = \{u \in \mathbb{R}: |u - u_{k}| \leq a_{T}h\}$ and use $u_{k}$ to denote the mid point of $B_{k,T}$. In addition, let $K^{\ast}(w,v) = CI(|w| \leq 2C_{1})K_{2}(v)$ for $(w,v) \in \mathbb{R}^{2}$. Note that for $u \in B_{k,T}$ and sufficiently large $T$,
\begin{align*}
&\left|K_{1,h}\left(u - {t \over T}\right) - K_{1,h}\left(u_{k} - {t \over T}\right)\right|K_{2,h}\left(d\left(x, X_{t,T}\right)\right)\\
&\quad \leq a_{T}K_{h}^{\ast}\left(u_{k} - {t \over T}, d\left(x, X_{t,T}\right)\right)
\end{align*} 
with $K_{h}^{\ast}(v) = K^{\ast}(v/h)$. Define
\begin{align*}
\bar{\psi}_{1}(u,x) &= {1 \over Th\phi(h)}\sum_{t=1}^{T}K_{h}^{\ast}\left(u - {t \over T}, d\left(x, X_{t,T}\right)\right)|W_{t,T}|I(|W_{t,T}| \leq \tau_{T}).
\end{align*}  
Note that $E\left[\left|\bar{\psi}_{1}(u,x)\right|\right]\leq M<\infty$ for some sufficiently large $M$. Then we obtain
\begin{align*}
&\sup_{u \in B_{k,T}}\left|\hat{\psi}_{1}(u,x) - E[\hat{\psi}_{1}(u,x)]\right|\\
&\leq \left|\hat{\psi}_{1}(u_{k},x) - E[\hat{\psi}_{1}(u_{k},x)]\right| + a_{T}\left(\left|\bar{\psi}_{1}(u_{k},x)\right| + E\left[\left|\bar{\psi}_{1}(u_{k},x)\right|\right]\right)\\
&\leq \left|\hat{\psi}_{1}(u_{k},x) - E[\hat{\psi}_{1}(u_{k},x)]\right| + \left|\bar{\psi}_{1}(u_{k},x) - E[\bar{\psi}_{1}(u_{k},x)]\right| + 2Ma_{T}. 
\end{align*}
Hence we have  
\begin{align*}
&P\left(\sup_{u \in B}\left|\hat{\psi}_{1}(u,x) - E[\hat{\psi}_{1}(u,x)]\right|>4Ma_{T}\right)\\
&\leq N\max_{1 \leq k \leq N}P\left(\sup_{u \in B_{k,T}}\left|\hat{\psi}_{1}(u,x) - E[\hat{\psi}_{1}(u,x)]\right|>4Ma_{T}\right) \leq Q_{1,T} + Q_{2,T} 
\end{align*}
where 
\begin{align*}
Q_{1,T} &= N\max_{1 \leq k \leq N}P\left(\left|\hat{\psi}_{1}(u_{k},x) - E[\hat{\psi}_{1}(u_{k},x)]\right|>Ma_{T}\right),\\
Q_{2,T} &= N\max_{1 \leq k \leq N}P\left(\left|\bar{\psi}_{1}(u_{k},x) - E[\bar{\psi}_{1}(u_{k},x)]\right|>Ma_{T}\right).
\end{align*}
We focus on the analysis of $Q_{1,T}$ since $Q_{2,T}$ can be analyzed in almost the same way. Define
\begin{align*}
Z_{t,T}(u,x) &= K_{1,h}\left(u - {t \over T}\right)\left\{ K_{2,h}\left(d(x,X_{t,T})\right)W_{t,T}I(|W_{t,T}|\leq \tau_{T}) \right. \\
&\left. \quad  - E\left[K_{2,h}\left(d(x,X_{t,T})\right)W_{t,T}I(|W_{t,T}|\leq \tau_{T})\right]\right\}.
\end{align*}
Note that the array $\{Z_{t,T}(u,x)\}$ is $\alpha$-mixing  for each fixed $(u,x)$ with mixing coefficients $\alpha_{Z,T}$ such that $\alpha_{Z,T}(k) \leq \alpha(k)$. We apply Lemma \ref{lem: exp-ineq-mixing} below with $\varepsilon=M\alpha_{T}Th\phi(h)$, $b_{T}=C\tau_{T}$ for sufficiently large $C>0$ and $S_{T}=a_{T}^{-1}\tau_{T}^{-1}$. Furthermore, a straightforward extension of Theorem 2 in \cite{Ma05} yields that $\sigma^{2}_{S_{T},T} \leq C'S_{T}h\phi(h)$ with a constant $C'$ independent of $(u,x)$. Note that we can take $M>0$ sufficiently large such that $C' <M$. Therefore, for any fixed $(u,x)$ and sufficiently large $T$,  we have 
\begin{align*}
P\left(\left|\sum_{t=1}^{T}Z_{t,T}(u,x)\right| \geq Ma_{T}Th\phi(h) \right) &\leq  4\exp\left(-{\varepsilon^{2} \over 64\sigma^{2}_{S_{T},T}{T \over S_{T}} + {8 \over 3}\varepsilon b_{T}S_{T}}\right) + 4{T \over S_{T}}\alpha(S_{T})\\
&\leq 4\exp\left(-{M^{2}\log T} \over 64C' + {8 \over 3}CM\right) + 4{T \over S_{T}}AS_{T}^{-\gamma}\\
&\lesssim \exp\left(-{M\log T} \over 64{C'\over M} + {8 \over 3}C\right) + TS_{T}^{-\gamma-1}\\
&\leq \exp\left(-{M\log T} \over 64 + {8 \over 3}C\right) + TS_{T}^{-\gamma-1}\\
&= T^{-{M \over 64 + 3C}} + Ta_{T}^{\gamma+1}\tau_{T}^{\gamma+1}.
\end{align*}
Observe that 
\begin{align*}
R_{1,T}=h^{-1}a_{T}^{-1}T^{-{M \over 64+3C}} &= o(1)\ \text{(for sufficiently large $M>0$)},\\
R_{2,T}=h^{-1}a_{T}^{-1}Ta_{T}^{\gamma+1}\tau_{T}^{\gamma+1} &= h^{-1}T\left(\sqrt{\log T \over Th\phi(h)}\right)^{\gamma \over 2}\rho_{T}^{\gamma + 1}T^{{\gamma +1} \over \zeta}\\
&= {(\log T)^{{\gamma \over 2} + \zeta_{0}(\gamma +1)} \over T^{{\gamma \over 2}-1-{\gamma + 1 \over \zeta}}h^{{\gamma \over 2}+1}\phi(h)^{{\gamma \over 2}}} = o(1)
\end{align*}
Therefore, we have $Q_{1,T} \lesssim O(R_{1,T}) + O(R_{2,T}) = o(1)$. 
\end{proof}

\begin{proof}[Proof of Theorem \ref{Thm: unif-rate}]
Recall that 
\begin{align*}
\hat{m}(u, x) - m(u,x) &= {1 \over \hat{m}_{1}(u, x)}\left(\hat{g}_{1}(u,x) + \hat{g}_{2}(u,x) - m(u,x)\hat{m}_{1}(u,x)\right),
\end{align*}
where
\begin{align*}
\hat{m}_{1}(u,x) &= {1 \over Th\phi(h)}\sum_{t=1}^{T}K_{1,h}\left(u - {t \over T}\right)K_{2,h}\left(d\left(x, X_{t,T}\right)\right),\\
\hat{g}_{1}(u,x) &= {1 \over Th\phi(h)}\sum_{t=1}^{T}K_{1,h}\left(u - {t \over T}\right)K_{2,h}\left(d\left(x, X_{t,T}\right)\right)\varepsilon_{t, T},\\
\hat{g}_{2}(u,x) &= {1 \over Th\phi(h)}\sum_{t=1}^{T}K_{1,h}\left(u - {t \over T}\right)K_{2,h}\left(d\left(x, X_{t,T}\right)\right)m\left({t \over T} , X_{t,T}\right). 
\end{align*}

(Step1) First we give a sketch of the proof. In Steps 1 and 2, we show the following four results: 
\begin{itemize}
\item[(i)] $\sup_{u \in [0,1]}\left|\hat{g}_{1}(u,x)\right| = O_{p}\left(\sqrt{(\log T)/Th\phi(h)}\right)$. 
\item[(ii)] 
\begin{align*}
&\sup_{u \in [0,1]}\left|\hat{g}_{2}(u,x) - m(u,x)\hat{m}_{1}(u,x) - E\left[\hat{g}_{2}(u,x) - m(u,x)\hat{m}_{1}(u,x)\right]\right|\\
&\quad = O_{p}\left(\sqrt{(\log T)/Th\phi(h)}\right). 
\end{align*}
\item[(iii)] $\sup_{u \in [C_{1}h, 1-C_{1}h]}\left|E\left[\hat{g}_{2}(u,x) - m(u,x)\hat{m}_{1}(u,x)\right]\right| = O(h^{2}) + O(h^{\beta})$.
\item[(iv)] $1/\inf_{u \in [C_{1}h, 1-C_{1}h]}\hat{m}_{1}(u,x) = O_{p}(1)$.
\end{itemize}
(i) can be shown by applying Proposition \ref{Prop: unif-rate} with $W_{t,T} = \varepsilon_{t,T}$. (ii) can be shown by applying Proposition \ref{Prop: unif-rate} to $\hat{g}_{2}(u,x) - m(u,x)\hat{m}_{1}(u,x)$. For the proof of (iv), we decompose $\hat{m}_{1}(u,x)$ as follows: 
\begin{align*}
\hat{m}_{1}(u,x) = \tilde{m}_{1}(u,x) + \bar{m}_{1}(u,x),
\end{align*} 
where 
\begin{align*}
\tilde{m}_{1}(u,x) &= {1 \over Th\phi(h)}\sum_{t=1}^{T}K_{1,h}\left(u - {t \over T}\right)K_{2,h}\left(d\left(x, X_{t}^{(t/T)}\right)\right),\\
\bar{m}_{1}(u,x) &= {1 \over Th\phi(h)}\sum_{t=1}^{T}K_{1,h}\left(u - {t \over T}\right)\left\{K_{2,h}\left(d\left(x, X_{t,T}\right)\right)-K_{2,h}\left(d\left(x, X_{t}^{(t/T)}\right)\right)\right\}.
\end{align*}
Applying Proposition \ref{Prop: unif-rate} with $W_{t,T} = 1$, we have that $\sup_{u \in [0,1]}|\hat{m}_{1}(u,x) - E[\hat{m}_{1}(u,x)]|=o_{p}(1)$ uniformly in $u$. Moreover,
\begin{align*}
E[|\bar{m}_{1}(u,x)|] &\lesssim {1 \over Th\phi(h)}\sum_{t=1}^{T}K_{1,h}\left(u-{t \over T}\right){1 \over Th}E[U_{t,T}^{(t/T)}]\\
&\lesssim {o(\phi(h)) \over Th\phi(h)}\sum_{t=1}^{T}K_{1,h}\left(u-{t \over T}\right) = o(1)\ (\text{from Lemma \ref{lem: g}})
\end{align*}
uniformly in $u$. Then we have
\begin{align*}
\hat{m}_{1}(u,x) &= \hat{m}_{1}(u,x) - E[\hat{m}_{1}(u,x)] + E[\hat{m}_{1}(u,x)]\\
&= o_{p}(1) + E[\tilde{m}_{1}(u,x)] + E[\bar{m}_{1}(u,x)]\\
&= E[\tilde{m}_{1}(u,x)] + o_{p}(1) + o(1)
\end{align*}
uniformly in $u$. Observe that
\begin{align*}
E[\tilde{m}_{1}(u,x)] &= {1 \over Th\phi(h)}\sum_{t=1}^{T}K_{1,h}\left(u - {t \over T}\right)E\left[K_{2,h}\left(d\left(x, X_{t}^{(t/T)}\right)\right)\right]\\
&= {1 \over Th\phi(h)}\sum_{t=1}^{T}K_{1,h}\left(u - {t \over T}\right)\int_{0}^{h}K_{2,h}(y)dF_{t/T}(y;x)\\
&\gtrsim {1 \over Th\phi(h)}\sum_{t=1}^{T}K_{1,h}\left(u - {t \over T}\right)\phi(h)f_{1}(x) \sim f_{1}(x)>0
\end{align*}
uniformly in $u$. Note that the last inequality can be obtained by slightly extending Lemma 4.4 in \cite{FeVi06}. Therefore, we obtain 
\begin{align*}
{1 \over \inf_{u \in [C_{1}h, 1-C_{1}h]}\hat{m}_{1}(u,x)} &= {1 \over \inf_{u \in [C_{1}h, 1-C_{1}h]}E[\tilde{m}_{1}(u,x)] + o_{p}(1)+o(1)}=O_{p}(1).
\end{align*}

Combining the results (i), (ii), (iii) and (iv), we have that
\begin{align*}
&\sup_{u \in [C_{1}h, 1-C_{1}h]}\left|\hat{m}(u,x) - m(u,x)\right|\\
&\leq {1 \over \inf_{u \in [C_{1}h, 1-C_{1}h]}\hat{m}_{1}(u,x)}\left(\sup_{u \in [C_{1}h, 1-C_{1}h]}\left|\hat{g}_{1}(u,x)\right| + \sup_{u \in [C_{1}h, 1-C_{1}h]}\left|\hat{g}_{2}(u,x) - m(u,x)\hat{m}_{1}(u,x)\right|\right)\\
&\leq {1 \over \inf_{u \in [C_{1}h, 1-C_{1}h]}\hat{m}_{1}(u, x)}O_{p}\left(\sqrt{\log T \over Th\phi(h)} + h^{2} + h^{\beta}\right) = O_{p}\left(\sqrt{\log T \over Th\phi(h)} + h^2 + h^{\beta}\right).
\end{align*}
Therefore, we complete the proof. 

(Step2) In this step, we show (iii). Let $K_{0}: [0,1] \to \mathbb{R}$ be a Lipschitz continuous function with support $[0,q]$ for some $q>1$. Assume that $K_{0}(x) = 1$ for all $x \in [0,1]$ and write $K_{0,h}(x) = K_{0}(x/h)$. Observe that 
\begin{align*}
E\left[\hat{g}_{2}(u,x) - m(u,x)\hat{m}_{1}(u,x))\right] &= \sum_{i=1}^{4}Q_{i}(u,x),
\end{align*}
where
\begin{align*}
Q_{i}(u,x) &= {1 \over nh\phi(h)}\sum_{t=1}^{T}K_{1,h}\left(u - {t \over T}\right)q_{i}(u,x)
\end{align*}
and 
\begin{align*}
q_{1}(u,x) &= E\left[ K_{0,h}(d\left(x, X_{t,T}\right))\left\{K_{2,h}(d\left(x, X_{t,T}\right)) - K_{2,h}\left(d\left(x, X_{t}^{(t/T)}\right)\right) \right\} \right. \\
&\left .\quad \quad \times \left\{m\left({t \over T}, X_{t,T}\right) - m(u,x)\right\} \right],
\end{align*}
\begin{align*}
q_{2}(u,x) &= E\left[K_{0,h}(d\left(x, X_{t,T}\right))K_{2,h}\left(d\left(x, X_{t}^{(t/T)}\right)\right) \left\{m\left({t \over T}, X_{t,T}\right) - m\left({t \over T},X_{t}^{(t/T)}\right)\right\} \right],
\end{align*}
\begin{align*}
q_{3}(u,x) &= E\left[\left\{K_{0,h}(d\left(x, X_{t,T}\right)) - K_{0,h}\left(d\left(x, X_{t}^{(t/T)}\right)\right) \right\} \right. \\
&\left. \quad \quad K_{2,h}\left(d\left(x, X_{t}^{(t/T)}\right)\right) \left\{m\left({t \over T}, X_{t}^{(t/T)}\right) - m(u,x)\right\}\right],
\end{align*}
\begin{align*}
q_{4}(u,x) &= E\left[K_{2,h}\left(d\left(x, X_{t}^{(t/T)}\right)\right)\left\{m\left({t \over T}, X_{t}^{(t/T)}\right) - m(u,x)\right\} \right].
\end{align*}
We first consider $Q_{1}(u,x)$. Observe that
\begin{align*}
Q_{1}(u,x) &\lesssim {1 \over Th\phi(h)}\sum_{t=1}^{T}K_{1,h}\left(u - {t \over T}\right)E\left[\left|K_{2,h}(d\left(x, X_{t,T}\right)) - K_{2,h}\left(d\left(x, X_{t}^{(t/T)}\right)\right)\right| \right. \\
&\left. \quad \times K_{0,h}(d\left(x, X_{t,T}\right)) \left|m\left({t \over T}, X_{t,T}\right) - m(u,x)\right|\right].
\end{align*}
Note that $K_{0,h}(d\left(x, X_{t,T}\right)) \left|m\left({t \over T}, X_{t,T}\right) - m(u,x)\right| \lesssim h^{1 \wedge \beta}$. Since $K_{2}$ is Lipschitz and $d\left(X_{t,T}, X_{t}^{(t/T)}\right)\leq {1 \over T}U_{t,T}^{(t/T)}$, we have that 
\begin{align*}
&Q_{1}(u,x)\\
&\quad \lesssim {h^{1 \wedge \beta} \over Th\phi(h)}\sum_{t=1}^{T}K_{1,h}\left(u - {t \over T}\right)E\left[\left|K_{2,h}(d\left(x, X_{t,T}\right)) - K_{2,h}\left(d\left(x, X_{t}^{(t/T)}\right)\right)\right|\right]\\
&\quad \lesssim {h^{1\wedge \beta} \over Th\phi(h)}\sum_{t=1}^{T}K_{1,h}\left(u - {t \over T}\right)E\left[\left|{1 \over Th}U_{t,T}^{(t/T)}\right|\right] \lesssim {1 \over Th^{1-(1\wedge \beta)}\phi(h)}
\end{align*}
uniformly in $u$. Using similar arguments, we can also show that 
\begin{align*}
\sup_{u \in [C_{1}h,1-C_{1}h]}|Q_{2}(u,x)| &\lesssim {1 \over Th^{1-(1\wedge \beta)}\phi(h)},\ \sup_{u \in [C_{1}h,1-C_{1}h]}|Q_{3}(u,x)| \lesssim {1 \over Th^{1-(1\wedge \beta)}\phi(h)}. 
\end{align*}
Finally, applying Lemma \ref{lem: K} below and using the assumptions on the smoothness of $m$, we have that $\sup_{u \in [C_{1}h,1-C_{1}h]}|Q_{4}(u,x)| \lesssim h^2 + h^{\beta}$. 
\end{proof}

\begin{proof}[Proof of Theorem \ref{Thm: normality}]
Recall that
\begin{align*}
\hat{m}(u,x) - m(u,x) = {1 \over \hat{m}_{1}(u,x)}\left(\hat{g}_{1}(u,x) + \hat{g}_{2}(u,x) - m(u,x)\hat{m}_{1}(u,x)\right).
\end{align*}
Define $\hat{g}^{B}(u,x) = \left(\hat{g}_{2}(u,x) - m(u,x)\hat{m}_{1}(u,x))\right)$.

(Step1) First, we will show that
\begin{align}\label{bias-neg}
\hat{g}^{B}(u,x) - E[\hat{g}^{B}(u,x)] &= o_{p}\left(\sqrt{{1 \over Th\phi(h)}}\right). 
\end{align}
Define $\Delta_{t,T}(u,x)=K_{2,h}\left(d\left(x,X_{t,T}\right)\right)\left(m\left({t \over T}, X_{t,T}\right) - m(u,x)\right)$. 
Observe that
\begin{align*}
&\Var(\hat{g}^{B}(u,x))\\
&\quad = {1 \over (Th\phi(h))^{2}}\left\{\sum_{t=1}^{T}K_{1,h}^{2}\left(u-{t \over T}\right)\Var(\Delta_{t,T}(u,x)) \right. \\
&\left. \quad + \sum_{t_{1}, t_{2}=1, t_{1} \neq t_{2}}^{T}K_{1,h}\left(u-{t_{1} \over T}\right)K_{1,h}\left(u-{t_{2} \over T}\right)\Cov(\Delta_{t_{1},T}(u,x),\Delta_{t_{2},T}(u,x))\right\}\\
&\quad =: V^{B}_{1,T} + V^{B}_{2,T}. 
\end{align*}
For $V^{B}_{1,T}$, 
\begin{align*}
|V^{B}_{1,T}| &\lesssim {h^{2(1\wedge \beta)} \over (Th\phi(h))^{2}}\sum_{t=1}^{T}K_{1,h}^{2}\left(u-{t \over T}\right)E\left[K_{2,h}^{2}\left(d\left(x,X_{t,T}\right)\right)\right]\\
&\lesssim {h^{2(1\wedge \beta)} \over (Th\phi(h))^{2}}\sum_{t=1}^{T}K_{1,h}^{2}\left(u-{t \over T}\right)\left\{E\left[K_{2,h}^{2}\left(d\left(x,X_{t}^{(t/T)}\right)\right)\right] + {1 \over Th}E[U_{t,T}^{(t/T)}]\right\}\\
&\lesssim {h^{2(1\wedge \beta)}\phi(h) \over (Th\phi(h))^{2}}\sum_{t=1}^{T}K_{1,h}^{2}\left(u-{t \over T}\right) \lesssim {h^{2(1\wedge \beta)}\phi(h) \over (Th\phi(h))^{2}} \ll {1 \over Th\phi(h)}.
\end{align*}

For $V^{B}_{2,T}$, 
\begin{align*}
V^{B}_{2,T} &= {1 \over (Th\phi(h))^{2}} \sum_{t_{1}, t_{2}=1, 1\leq |t_{1} - t_{2}| \leq \lambda_{T}}^{T}K_{1,h}\left(u-{t_{1} \over T}\right)K_{1,h}\left(u-{t_{2} \over T}\right)\Cov(\Delta_{t_{1},T}(u,x),\Delta_{t_{2},T}(u,x))\\
&\quad +{1 \over (Th\phi(h))^{2}} \sum_{t_{1}, t_{2}=1, 1\leq |t_{1} - t_{2}| > \lambda_{T}}^{T}K_{1,h}\left(u-{t_{1} \over T}\right)K_{1,h}\left(u-{t_{2} \over T}\right)\Cov(\Delta_{t_{1},T}(u,x),\Delta_{t_{2},T}(u,x))\\
&=: V^{B}_{21,T} + V^{B}_{22,T} 
\end{align*}
where $\lambda_{T} = o(T)$ at a rate specified in the sequel. For $V^{B}_{21,T}$, 
\begin{align}
|V^{B}_{21,T}| &\leq {1 \over (Th\phi(h))^{2}} \sum_{t_{1}, t_{2}=1, 1\leq |t_{1} - t_{2}| \leq \lambda_{T}}^{T}K_{1,h}\left(u-{t_{1} \over T}\right)K_{1,h}\left(u-{t_{2} \over T}\right) \nonumber \\
&\quad \times \left(E\left[\Delta_{t_{1},T}(u,x)\Delta_{t_{2},T}(u,x)\right] + E\left[\Delta_{t_{1},T}(u,x)\right]E\left[\Delta_{t_{2},T}(u,x)\right]\right)\nonumber \\
&\lesssim {h^{2(1 \wedge \beta)} \over (Th\phi(h))^{2}} \sum_{t_{1}, t_{2}=1, 1\leq |t_{1} - t_{2}| \leq \lambda_{T}}^{T}K_{1,h}\left(u-{t_{1} \over T}\right)K_{1,h}\left(u-{t_{2} \over T}\right)(\psi(h) + \phi^{2}(h)) \nonumber  \\
&\lesssim {h^{2(1 \wedge \beta)}(\psi(h)+\phi^{2}(h)) \over (Th\phi(h))^{2}}T\lambda_{T} \lesssim {1 \over Th\phi(h)}\times {h^{2(1\wedge \beta)-1}\phi(h)\lambda_{T}}. \label{V21-bound}
\end{align}
We shall subsequently select $\lambda_{T}$ to make the right hand side of (\ref{V21-bound}) tends to zero as $T \to \infty$. By Davydov's Lemma (\cite{HaHe80}, Corollary A.2), 
\begin{align*}
&\Cov(\Delta_{t_{1},T}(u,x),\Delta_{t_{2},T}(u,x))\\ 
&\quad \lesssim E[\Delta_{t_{1},T}(u,x)^{\nu}]^{1/\nu}E[\Delta_{t_{2},T}(u,x)^{\nu}]^{1/\nu}(\alpha(|t_{1}-t_{2}|))^{1-{2 \over \nu}}\\
&\quad \lesssim h^{2(1\wedge \beta)}E[K_{2,h}\left(d\left(x,X_{t_{1},T}\right)\right)^{\nu}]^{1/\nu}E[K_{2,h}\left(d\left(x,X_{t_{2},T}\right)\right)^{\nu}]^{1/\nu}(\alpha(|t_{1}-t_{2}|))^{1-{2 \over \nu}}\\
&\quad \lesssim h^{2(1\wedge \beta)}E[K_{2,h}\left(d\left(x,X_{t_{1},T}\right)\right)^{2}]^{1/\nu}E[K_{2,h}\left(d\left(x,X_{t_{2},T}\right)\right)^{2}]^{1/\nu}\alpha(|t_{1}-t_{2}|)^{1-{2 \over \nu}}\\
&\quad \lesssim h^{2(1\wedge \beta)}\phi^{2/\nu}(h)(\alpha(k))^{1-{2 \over \nu}}.
\end{align*}
For the third inequality, we used the boundedness of $K_{2}$. Then for $V^{B}_{22}$, 
\begin{align*}
|V^{B}_{22}| &\lesssim {h^{2(1 \wedge \beta)}\phi^{2/\nu}(h) \over (Th\phi(h))^{2}}\sum_{t_{1}, t_{2}=1, 1\leq |t_{1} - t_{2}| > \lambda_{T}}^{T}(\alpha(|t_{1}-t_{2}|))^{1-{2 \over \nu}}\\
&\lesssim {1 \over Th\phi(h)} \times {h^{2(1\wedge \beta)-1} \over \lambda_{T}^{\delta}(\phi(h))^{1-{2 \over \nu}}}\sum_{k=\lambda_{T}+1}^{\infty}k^{\delta}(\alpha(k))^{1-{2 \over \nu}}.
\end{align*}
Now we select $\lambda_{T}$ as $\lambda_{T} = \lfloor (\phi(h))^{-(1-{2 \over \nu})/\delta}\rfloor$. Then by Assumption \ref{Ass-E}, 
\begin{align*}
\Var(\hat{g}^{B}(u,x)) &\leq |V^{B}_{1,T}| + |V^{B}_{2,T}| = o\left({1 \over Th\phi(h)}\right). 
\end{align*}
This yields (\ref{bias-neg}). From the argument in (Step1) of the proof of Theorem \ref{Thm: unif-rate}, we have $E[\hat{g}^{B}(u,x)]=O(h^{2\wedge \beta})$, $\hat{m}_{1}(u,x) = E[\hat{m}_{1}(u,x)] + o_{p}(1)$ and $\lim_{T \to \infty}E[\hat{m}_{1}(u,x)]>0$. Therefore, 
\begin{align*}
\hat{m}(u,x)-m(u,x) &= {\hat{g}_{1}(u,x) \over \hat{m}_{1}(u,x)}  + B_{T}(u,x) + o_{p}\left({1 \over Th\phi(h)}\right).
\end{align*}

(Step2) In this step, we will show 
\begin{align*}
Th\phi(h)\Var(\hat{g}_{1}(u,x)) &\sim E[\varepsilon^{2}_{1}]\sigma^{2}(u,x)\int K^{2}_{1}(w)dw>0\ \text{as $T \to \infty$}.
\end{align*}
Define $\tilde{g}_{1}(u,x)=\sqrt{Th\phi(h)}\hat{g}_{1}(u,x)$. Observe that
\begin{align*}
\Var(\tilde{g}_{1}(u,x)) &= {1 \over Th\phi(h)}\sum_{t=1}^{T}K_{1,h}^{2}\left(u-{t \over T}\right)E\left[K_{2,h}^{2}\left(d\left(x,X_{t,T}\right)\right)\varepsilon_{t,T}^{2}\right]\\
&={\sigma^{2}(u,x)+o(1) \over Th\phi(h)}\sum_{t=1}^{T}K_{1,h}^{2}\left(u-{t \over T}\right)E\left[K_{2,h}^{2}\left(d\left(x,X_{t,T}\right)\right)\varepsilon_{t}^{2}\right]\\
&={E[\varepsilon_{1}^{2}](\sigma^{2}(u,x)+o(1)) \over Th\phi(h)}\sum_{t=1}^{T}K_{1,h}^{2}\left(u-{t \over T}\right)E\left[K_{2,h}^{2}\left(d\left(x,X_{t,T}\right)\right)\right].
\end{align*}
Since 
\begin{align*}
E\left[\left|K_{2,h}^{2}\left(d\left(x,X_{t,T}\right)\right)-K_{2,h}^{2}\left(d\left(x,X_{t}^{(t/T)}\right)\right)\right|\right] &\lesssim E\left[\left|K_{2,h}\left(d\left(x,X_{t,T}\right)\right)-K_{2,h}\left(d\left(x,X_{t}^{(t/T)}\right)\right)\right|\right]\\
&\lesssim {1 \over Th}E[U_{t,T}^{(t/T)}] = o(\phi(h)),
\end{align*}
we have
\begin{align*}
\Var(\tilde{g}_{1}(u,x)) &={E[\varepsilon_{1}^{2}](\sigma^{2}(u,x)+o(1)) \over Th\phi(h)}\sum_{t=1}^{T}K_{1,h}^{2}\left(u-{t \over T}\right)E\left[K_{2,h}^{2}\left(d\left(x,X_{t}^{(t/T)}\right)\right)\right]\\
&\quad + {E[\varepsilon_{1}^{2}]o(\phi(h))(\sigma^{2}(u,x)+o(1)) \over Th\phi(h)}\sum_{t=1}^{T}K_{1,h}^{2}\left(u-{t \over T}\right)\\
&= {E[\varepsilon_{1}^{2}](\sigma^{2}(u,x)+o(1)) \over Th\phi(h)}\sum_{t=1}^{T}K_{1,h}^{2}\left(u-{t \over T}\right)E\left[K_{2,h}^{2}\left(d\left(x,X_{t}^{(t/T)}\right)\right)\right] + o(1). 
\end{align*}
By the integration by parts and change of variables, 
\begin{align*}
E\left[K_{2,h}^{2}\left(d\left(x,X_{t}^{(t/T)}\right)\right)\right] &= -{2 \over h}\int_{0}^{h}K_{2,h}(y)K'_{2,h}(y)F_{t/T}(y;x)dy\\
&\sim -{2 \over h}\int_{0}^{h}K_{2,h}(y)K'_{2,h}(y)\phi(y)dy\\
&= {2 \over h}\int_{0}^{h}\left(1-{y\over h}\right)\phi(y)dy\\
&= {2 \over h^2}\int_{0}^{h}\left(\int_{0}^{y}\phi(z)dz\right)dy\\
&\sim {2 \over h^{2}}\int_{0}^{h}y\phi(y)dy \sim {1 \over h^{2}}h^{2}\phi(h) \sim \phi(h).
\end{align*}
Threfore, we have
\begin{align*}
\Var(\tilde{g}_{1}(u,x)) &\sim {E[\varepsilon_{1}^{2}](\sigma^{2}(u,x)+o(1)) \over Th}\sum_{t=1}^{T}K_{1,h}^{2}\left(u-{t \over T}\right)\\
&\sim E[\varepsilon_{1}^{2}]\sigma^{2}(u,x)\int K_{1}^{2}(w)dw. 
\end{align*}
(Step 3) Moreover, $\tilde{g}_{1}(u,x)$ is asymptotically normal. In particular, 
\begin{align}\label{asy-normality}
\tilde{g}_{1}(u,x) \stackrel{d}{\to} N(0, V(u,x))\ \text{as $T \to \infty$}.
\end{align}
We can show (\ref{asy-normality}) by applying blocking arguments of \cite{Be26} and Volkonskii and Rozanov inequality (cf. Proposition 2.6 in \cite{FaYa03}). Assumption \ref{Ass-block} implies that there exists a sequence of positive integers $\{q_T\}$ such that as $T \to \infty$, $q_T \to \infty$, 
\[
q_Tv_T = o(\sqrt{Th\phi(h)}),\ q_T\sqrt{T \over h\phi(h)}\alpha(v_T) \to 0.
\] 
Decompose $\tilde{g}_{1}(u,x)$ into  big-blocks and small-blocks as follows:
\begin{align*}
\tilde{g}_1(u,x) &= {1 \over \sqrt{Th\phi(h)}}\sum_{j=1}^{k_T}\xi_{j}(u,x) +  {1 \over \sqrt{Th\phi(h)}}\sum_{j=1}^{k_T}\eta_{j}(u,x) + \zeta(u,x)\\
&=: \tilde{g}_{11}(u,x) + \tilde{g}_{12}(u,x) + \tilde{g}_{13}(u,x),
\end{align*} 
where 
\begin{align*}
\xi_{j}(u,x) &= \sum_{t = (j-1)(\ell_T + s_T) + 1}^{j\ell_T + (j-1)s_T}K_{1,h}(u-t/T)K_{2,h}(d(x,X_{t,T}))\varepsilon_{t,T},\\
\eta_{j}(u,x) &= \sum_{t = j\ell_T + (j-1)s_T + 1}^{j(\ell_T + s_T)}K_{1,h}(u-t/T)K_{2,h}(d(x,X_{t,T}))\varepsilon_{t,T},\\
\zeta(u,x) &= \sum_{t = k_T(\ell_T + s_T) + 1}^{T}K_{1,h}(u-t/T)K_{2,h}(d(x,X_{t,T}))\varepsilon_{t,T},
\end{align*}
and where $\ell_T = \lfloor (Th\phi(h))^{1/2}/q_T\rfloor$, $s_T = v_T$, $k_T = \lfloor T/(\ell_T + s_T)\rfloor$. 
We can neglect the sum of small blocks $\tilde{g}_{12}(u,x)$ and $\tilde{g}_{13}(u,x)$, and exploit the mixing conditions to replace the big
blocks $\xi_{j}(u,x)$ by independent random variables. This allows us to apply a Lindeberg theorem to get the result. We omit the details as the proof is similar to that of Theorem 4 in \cite{Ma05}. Combining (\ref{asy-normality}) and the results in Steps 1 and 2, we obtain the conclusion.  
\end{proof}

\section{Technical Tools}

In this section we provide some lemmas used in the proofs of main results. The proofs of following Lemmas \ref{lem: K} and \ref{lem: g} are straightforward and thus omitted. Let $I_{h}=[C_{1}h, 1-C_{1}h]$.

\begin{lemma}\label{lem: K}
Suppose that kernel $K_{1}$ satisfies Assumption \ref{Ass-KB} (KB1). Then for $k=0,1,2$, 
\begin{align*}
\sup_{u \in I_{h}}\left|{1 \over Th}\sum_{t=1}^{T}K_{1,h}\left(u-{t \over T}\right)\left({u - t/T \over h}\right)^{k} - \int_{0}^{1}{1 \over h}K_{1,h}(u-v)\left({u - v \over h}\right)^{k}dv\right| = O\left({1 \over Th^{2}}\right). 
\end{align*}
\end{lemma}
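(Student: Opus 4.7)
The plan is to identify both expressions as, respectively, a Riemann sum and the associated integral of the same smooth, compactly supported function, and to use the Lipschitz continuity of $K_{1}$ in Assumption \ref{Ass-KB} (KB1) to control the quadrature error.

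For fixed $u \in I_{h}$ and $k \in \{0,1,2\}$, set $f_{u}(v) = h^{-1} K_{1,h}(u-v) \bigl((u-v)/h\bigr)^{k}$. The quantity to bound is then $\bigl| T^{-1}\sum_{t=1}^{T} f_{u}(t/T) - \int_{0}^{1} f_{u}(v)\,dv \bigr|$. Because $u \in I_{h} = [C_{1} h, 1 - C_{1} h]$ and $K_{1}$ has support in $[-C_{1}, C_{1}]$, the function $f_{u}$ is supported in $[u - C_{1} h, u + C_{1} h] \subset [0,1]$, so neither the sum nor the integral is affected by boundary truncation.

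The standard Riemann-sum inequality
\[
\Bigl| T^{-1}\sum_{t=1}^{T} f_{u}(t/T) - \int_{0}^{1} f_{u}(v)\,dv \Bigr| \leq \sum_{t=1}^{T} \int_{(t-1)/T}^{t/T} \bigl| f_{u}(t/T) - f_{u}(v) \bigr|\, dv
\]
will then be controlled by the Lipschitz constant $L_{u}$ of $f_{u}$ and the length of its effective support. Computing $f_{u}'$ by the chain rule, the Lipschitz continuity and boundedness of $K_{1}$, together with the boundedness of $((u-v)/h)^{k}$ and $((u-v)/h)^{k-1}$ on the support of $f_{u}$, give $L_{u} = O(h^{-2})$ uniformly in $u$. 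Since only $O(Th)$ of the summands are nonzero, the total error is bounded by $O(Th) \cdot L_{u} \cdot T^{-2} = O(1/(Th))$, and \emph{a fortiori} by $O(1/(Th^{2}))$, uniformly in $u \in I_{h}$.

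I do not foresee any real obstacle: the only points requiring care are the bookkeeping of factors of $h^{-1}$ when differentiating $f_{u}$, and the use of the inclusion $u \in I_{h}$ to rule out boundary contributions from $[0, C_{1} h]$ or $[1-C_{1} h, 1]$ that would otherwise require separate treatment.
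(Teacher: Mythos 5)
The paper omits the proof of this lemma as ``straightforward,'' and your Riemann-sum argument is exactly the standard route it has in mind: it is correct, the uniformity in $u$ is clear since the Lipschitz bound $L_{u}=O(h^{-2})$ and the support count $O(Th)$ depend only on $C_{1}$, $C_{2}$, $\sup|K_{1}|$ and $k$, and your accounting in fact yields the sharper rate $O(1/(Th))$, which implies the stated $O(1/(Th^{2}))$ because $h\to 0$. The only cosmetic point is that $K_{1}$ is assumed Lipschitz but not differentiable, so rather than ``computing $f_{u}'$'' you should bound the Lipschitz constant of the product $K_{1}(w)w^{k}$ directly via $|K_{1}(w_{1})w_{1}^{k}-K_{2}(w_{2})w_{2}^{k}|\leq |K_{1}(w_{1})-K_{1}(w_{2})|\,|w_{1}|^{k}+|K_{1}(w_{2})|\,|w_{1}^{k}-w_{2}^{k}|$ restricted to the $O(Th)$ intervals meeting the support, where $|w|\leq C_{1}+1$.
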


\begin{lemma}\label{lem: g}
Suppose that kernel $K_{1}$ satisfies Assumption \ref{Ass-KB} (KB1) and let $g: [0,1] \times \mathscr{H} \to \mathbb{R}$, $(u,x) \mapsto g(u,x)$ be continuously differentiable with respect to $u$. Then, 
\begin{align*}
\sup_{u \in I_{h}}\left|{1 \over Th}\sum_{t=1}^{T}K_{1,h}\left(u-{t \over T}\right)g\left({t \over T}, x\right) - g(u,x)\right| = O\left({1 \over Th^{2}}\right) + o(h). 
\end{align*}
\end{lemma}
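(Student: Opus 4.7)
My plan is to decompose the claim into a Riemann-sum-to-integral error plus a kernel-smoothing bias:
\[
\frac{1}{Th}\sum_{t=1}^{T}K_{1,h}\!\left(u-\tfrac{t}{T}\right)g\!\left(\tfrac{t}{T},x\right) - g(u,x) = [S_T(u,x)-I_T(u,x)] + [I_T(u,x)-g(u,x)],
\]
where $I_T(u,x) = \int_0^1 h^{-1} K_1((u-v)/h)\, g(v,x)\, dv$ and $S_T(u,x)$ denotes the Riemann sum on the left. I would bound the two pieces separately.

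For the discretization error $S_T-I_T$, I would exploit two Lipschitz-type facts: the kernel $K_1$ is Lipschitz with some constant $C_2$ by (KB1), and $g(\cdot,x)$ is $C^1$ on the compact set $[0,1]$, hence Lipschitz in $u$ with some constant $L_x$. On each subinterval $[(t-1)/T,\,t/T]$ of length $1/T$, the integrand of $I_T$ differs from its value at $v=t/T$ by at most $C_2 \|g(\cdot,x)\|_\infty h^{-2} T^{-1}$ (from the kernel part, which contributes only when $|u-t/T|\le C_1 h + 1/T$) plus $L_x h^{-1} K_{1,h}(u-t/T) T^{-1}$ (from the $g$ part). Since the kernel has compact support, only $O(Th)$ indices $t$ contribute to each part; summing yields $\sup_{u\in I_h} |S_T(u,x)-I_T(u,x)| = O(1/(Th^2))$.

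For the smoothing bias $I_T - g$, the boundary restriction $u\in I_h=[C_1 h,\, 1-C_1 h]$ ensures that the support $[u-C_1 h,\, u+C_1 h]$ of $K_1((u-\cdot)/h)$ lies inside $[0,1]$, so the change of variables $w=(u-v)/h$ gives
\[
I_T(u,x) = \int_{-C_1}^{C_1} K_1(w)\, g(u-wh, x)\, dw.
\]
By the mean-value theorem, $g(u-wh,x) - g(u,x) = -wh\, \partial_u g(\xi_{w,u,h}, x)$ for some $\xi_{w,u,h} \in [u-C_1 h,\, u+C_1 h]$. Adding and subtracting $\partial_u g(u,x)$ and invoking $\int K_1 = 1$ and the symmetry $\int w K_1(w)\, dw = 0$ from (KB1), I obtain
\[
I_T(u,x) - g(u,x) = -h \int_{-C_1}^{C_1} w\, K_1(w)\, \bigl[\partial_u g(\xi_{w,u,h}, x) - \partial_u g(u,x)\bigr]\, dw.
\]
Since $\partial_u g(\cdot,x)$ is continuous on the compact interval $[0,1]$ it is uniformly continuous, so the bracketed difference is bounded by a modulus of continuity $\omega_x(C_1 h) \to 0$ uniformly in $u \in I_h$ and $w \in [-C_1, C_1]$. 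Hence $\sup_{u\in I_h} |I_T(u,x)-g(u,x)| = o(h)$.

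Combining the two bounds yields the claim. The only real obstacle is bookkeeping uniformity in $u$ for the Riemann-sum step (making sure the Lipschitz estimates on $K_{1,h}$, which scale like $1/h^2$, are applied only on the $O(Th)$ indices where the kernel does not vanish); the rest reduces to a symmetric-kernel / uniform-continuity argument.
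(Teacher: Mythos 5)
Your proof is correct, and it is essentially the argument the paper has in mind: the paper omits the proof of Lemma \ref{lem: g} as ``straightforward,'' and your decomposition into a Riemann-sum discretization error (controlled via the Lipschitz continuity of $K_{1}$ on its $O(Th)$ contributing indices) plus a smoothing bias killed to first order by the symmetry $\int wK_{1}(w)\,dw=0$ and uniform continuity of $\partial_{u}g(\cdot,x)$ is the canonical way to fill it in. Two minor remarks: your discretization bookkeeping actually yields the sharper rate $O(1/(Th))$ (Lipschitz constant $O(h^{-2})$, mesh $1/T$, support length $O(h)$), which is of course within the stated $O(1/(Th^{2}))$; and since (KB1) does not require $K_{1}\geq 0$, the final bias bound should be written with $\int_{-C_{1}}^{C_{1}}|w|\,|K_{1}(w)|\,dw<\infty$, which holds because $K_{1}$ is bounded with compact support.
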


The following result is an exponential inequality for strongly mixing sequences given in \cite{Li96}. 

\begin{lemma}[Theorem 2.1 in \cite{Li96}]\label{lem: exp-ineq-mixing}
Let $\{Z_{t,T}\}$ be a zero-mean triangular array such that $|Z_{t,T}| \leq b_{T}$ with $\alpha$-mixing coefficients $\alpha(k)$. Then for any $\varepsilon>0$ and $S_{T}\leq T$ with $\varepsilon>4S_{T}b_{T}$, 
\[
P\left(\left|\sum_{t=1}^{T}Z_{t,T}\right| \geq \varepsilon \right) \leq 4\exp\left(-{\varepsilon^{2} \over 64\sigma^{2}_{S_{T},T}{T \over S_{T}} + {8 \over 3}\varepsilon b_{T}S_{T}}\right) + 4{T \over S_{T}}\alpha(S_{T})
\]
where $\sigma^{2}_{S_{T},T} = \sup_{0 \leq j \leq T-1}E\left[\left(\sum_{t=j+1}^{(j+S_{T})\wedge T}Z_{t,T}\right)^{2}\right]$.
\end{lemma}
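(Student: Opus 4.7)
The plan is to follow the classical block-decomposition plus coupling strategy for strongly mixing sums, reducing the problem to Bernstein's inequality for bounded independent summands. I would partition $\{1,\dots,T\}$ into $r := \lfloor T/S_{T}\rfloor$ consecutive blocks $I_{j}=\{(j-1)S_{T}+1,\dots,jS_{T}\}$ (plus an optional remainder block of length at most $S_{T}$), set $U_{j} := \sum_{t \in I_{j}} Z_{t,T}$, and split by parity:
\[
S_{\mathrm{odd}} := \sum_{j \text{ odd}} U_{j}, \qquad S_{\mathrm{even}} := \sum_{j \text{ even}} U_{j}.
\]
The point of the parity split is that within $S_{\mathrm{odd}}$ (and similarly within $S_{\mathrm{even}}$) consecutive summands are separated in time by at least $S_{T}$, so their joint law is close to a product measure at rate $\alpha(S_{T})$. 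The hypothesis $\varepsilon > 4S_{T}b_{T}$ ensures that the deterministic contribution of the remainder block (bounded by $S_{T}b_{T}$) is negligible compared with $\varepsilon$, so that a union bound reduces the tail of $|\sum_{t}Z_{t,T}|$ to controlling $P(|S_{\mathrm{odd}}|\geq \varepsilon/2)$ and $P(|S_{\mathrm{even}}|\geq \varepsilon/2)$ up to harmless absolute constants.

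For each sub-sum I would apply Bradley's coupling lemma inductively to construct mutually independent surrogates $\widetilde U_{1},\widetilde U_{3},\dots$ matching the marginal laws of $U_{1},U_{3},\dots$ and satisfying $P(\widetilde U_{2k-1}\neq U_{2k-1}) \leq C\alpha(S_{T})$; summing these coupling errors over the at most $T/S_{T}$ blocks on each parity class yields the $4(T/S_{T})\alpha(S_{T})$ additive term in the statement. The surrogate sum $\widetilde S_{\mathrm{odd}} := \sum_{k}\widetilde U_{2k-1}$ is then a mean-zero sum of independent summands with $|\widetilde U_{2k-1}| \leq S_{T}b_{T}$ and $\mathrm{Var}(\widetilde U_{2k-1}) \leq \sigma_{S_{T},T}^{2}$ (the latter being immediate from the definition of $\sigma_{S_{T},T}^{2}$), so the classical Bernstein inequality delivers
\[
P\bigl(|\widetilde S_{\mathrm{odd}}| \geq t\bigr) \leq 2\exp\!\left(-\frac{t^{2}}{2V + \tfrac{2}{3}tM}\right)
\]
with $M = S_{T}b_{T}$ and $V = (T/S_{T})\sigma_{S_{T},T}^{2}$. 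Taking $t = \varepsilon/2$ and tracking the constants (the factor $1/4$ from squaring $\varepsilon/2$, together with a factor $2$ from the two-sided tail and another factor $2$ from summing odd and even contributions) produces the stated denominator $64 \sigma_{S_{T},T}^{2}(T/S_{T})+\tfrac{8}{3}\varepsilon b_{T}S_{T}$ and the leading constant $4$ in front of the exponential.

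The main technical obstacle is the coupling step: Bradley's lemma requires the probability space to be sufficiently rich (one augments by an independent uniform if necessary), and the inductive construction must be executed so that the surrogates $\widetilde U_{2k-1}$ are jointly independent while each keeps the correct marginal. Verifying that the one-step coupling error of each new block is $O(\alpha(S_{T}))$ uses the $\alpha$-mixing definition applied to the $\sigma$-field generated by the past blocks against that of the current block, which are separated by time $S_{T}$. The numerology---showing that the loose constants from a textbook statement of Bernstein's inequality and of Bradley's lemma combine to give exactly $64$ and $8/3$---is somewhat delicate but routine once the blocking is set up. Since the result is quoted verbatim from \cite{Li96}, I would attribute the detailed bookkeeping to that reference rather than reproduce it in full.
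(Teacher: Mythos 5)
The paper does not prove this lemma at all: it is imported verbatim as Theorem 2.1 of \cite{Li96}, so your sketch has to be measured against the argument in that reference. Against that benchmark there is one genuine gap, and it is the centerpiece of your plan: the coupling step. An exact-match coupling with $P(\widetilde U \neq U) \leq C\alpha(S_{T})$ is \emph{Berbee's} lemma, and it requires $\beta$-mixing (absolute regularity), not $\alpha$-mixing. Under strong mixing alone, the available tool is Bradley's lemma, which for a bounded block $U$ with $\|U\|_{\infty}\leq S_{T}b_{T}$ only produces, for each threshold $\xi>0$, a surrogate $\widetilde U$ with the same law, independent of the past, satisfying (up to an absolute constant)
\[
P\bigl(|\widetilde U - U| > \xi\bigr) \;\lesssim\; \bigl(S_{T}b_{T}/\xi\bigr)^{1/2}\,\alpha(S_{T})^{1/2}.
\]
Summing over the roughly $T/S_{T}$ blocks then yields an additive error of order $(T/S_{T})(S_{T}b_{T}/\xi)^{1/2}\alpha(S_{T})^{1/2}$ together with a shift of order $\xi T/S_{T}$ in the deviation threshold; no choice of $\xi$ removes the square root on $\alpha$. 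So the coupling route cannot deliver the stated term $4(T/S_{T})\alpha(S_{T})$, which is \emph{linear} in $\alpha(S_{T})$ --- and this linearity is exactly what the paper uses in the proof of Proposition A.1, where $R_{2,T}$ is controlled via $T S_{T}^{-\gamma-1}$ under $\alpha(k)\leq Ak^{-\gamma}$; with $\alpha^{1/2}$ in its place the mixing-rate requirement would effectively double.

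The way Liebscher actually obtains the linear-in-$\alpha$ term is to avoid coupling entirely: after the same parity blocking you describe, he applies the Chernoff method and decouples the blocks \emph{inside the Laplace transform}, peeling off one block at a time via the covariance inequality for bounded variables, $|\Cov(V,W)|\leq 4\|V\|_{\infty}\|W\|_{\infty}\,\alpha(\sigma(V),\sigma(W))$ (equivalently, the Volkonskii--Rozanov inequality, the same tool the paper invokes in Step 3 of the proof of Theorem \ref{Thm: normality}). Each peel costs $4\alpha(S_{T})$ times bounded factors, so iterating over the $\sim T/S_{T}$ blocks gives $E[\exp(tS_{\mathrm{odd}})] \leq \prod_{j} E[\exp(tU_{j})] + C(T/S_{T})\alpha(S_{T})$, and a Bernstein-type bound on each factor --- valid only for $t$ with $tS_{T}b_{T}$ bounded, which is where the hypothesis $\varepsilon>4S_{T}b_{T}$ is really used, namely to make the optimizing $t$ admissible rather than merely to absorb the remainder block --- produces the exponential factor with denominator $64\sigma^{2}_{S_{T},T}(T/S_{T})+\tfrac{8}{3}\varepsilon b_{T}S_{T}$. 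Your blocking setup, the variance bound $E[U_{j}^{2}]\leq \sigma^{2}_{S_{T},T}$, and the odd/even union bound are all sound; only the surrogate-independence step must be replaced by this moment-generating-function decoupling, since exact coupling with error linear in the mixing coefficient simply does not exist under $\alpha$-mixing.
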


\end{document}